\renewcommand{\geq}{\geqslant}
\renewcommand{\leq}{\leqslant}
\newtheorem{theorem}{Theorem}  
\newtheorem{proposition}{Proposition}
\theoremstyle{definition}
\newtheorem{definition}{Definition}
\newtheorem{lemma}{Lemma}
\theoremstyle{definition}
\newtheorem{remark}{Remark}
\newcommand{\R}{\mathbb{R}}
\newcommand{\al}{\alpha}
\newcommand{\FR}[1]{#1}
\newcommand{\Pt}[1]{\left({#1}\right)}
\newcommand{\eps}{\varepsilon}
\newcommand{\grad}{\nabla}
\newcommand{\albe}{\sqrt\frac{4\alpha^{3}}{27\beta} }
\title{Controlling swarming models towards flocks and mills}
\author{Jos\'e A. Carrillo\thanks{Mathematical Institute, University of Oxford, Oxford OX2 6GG, UK (\texttt{carrillo@maths.ox.ac.uk}).}
	\and
	Dante Kalise\thanks{School of Mathematical Sciences, University of Nottingham, UK (\texttt{dante.kalise@nottingham.ac.uk}).}
	\and
	Francesco Rossi\thanks{Dipartimento di Matematica ``Tullio Levi-Civita'', Universit\`a\ degli Studi di Padova, Via Trieste 63, 35121 Padova, Italy  (\texttt{francesco.rossi@math.unipd.it}).}
	\and
	Emmanuel Tr\'elat\thanks{Sorbonne Universit\'e, CNRS, Universit\'e de Paris, Inria, Laboratoire Jacques-Louis Lions (LJLL), F-75005 Paris, France (\texttt{emmanuel.trelat@sorbonne-universite.fr}).}
}
\begin{document}
	\maketitle
	
	\begin{abstract}
		Self-organization and control around flocks and mills is studied for second-order swarming systems involving self-propulsion and potential terms. It is shown that through the action of constrained control, it is possible to control any initial configuration to a flock or a mill.  The proof builds on an appropriate combination of several arguments: LaSalle invariance principle and Lyapunov-like decreasing functionals, control linearization techniques and quasi-static deformations. A stability analysis of the second-order system guides the design of feedback laws for the stabilization to flock and mills, which are also assessed computationally.

	\end{abstract}
	
	\section{Introduction}\label{intro}
	We analyse the controllability of the interacting particle system of $N$ agents on the plane, governed by second-order dynamics 
	\begin{equation}\label{syst}
			\begin{split}
				\dot x_i(t) &= v_i(t) \\
				\dot v_i(t) &= (\alpha-\beta \vert v_i(t)\vert^2 ) v_i(t) - \frac{1}{N} \sum_{\substack{j=1\\ j\neq i}}^N \nabla W(x_i(t)-x_j(t)) + u_i(t)\,,
			\end{split}
	\end{equation}
	where $x_i(t)\in\R^2$ (resp., $v_i(t)\in\R^2$) is the position (resp., the velocity) of the $i^\textrm{th}$ agent. 
	In this model, the term $ (\alpha-\beta \vert v_i\vert^2 ) v_i$, where $\alpha\geq0$ and $\beta>0$ are fixed, represents a \emph{self-propulsion} force, while
	\begin{equation}\label{syst2b}
	F_i(x) = \frac{1}{N} \sum_{\substack{j=1\\ j\neq i}}^N \nabla W(x_i-x_j) , \qquad F=(F_1,\ldots,F_N)^\top\,,
	\end{equation}
	expresses an \emph{attraction-repulsion} force through the pairwise interaction potential $W$. The control $u=(u_1,\ldots,u_N)$, with $u_i(t)\in\R^2$, is subject to the constraint $\Vert u(t)\Vert\leq M$ for almost every $t\in\R$, where $M>0$ is fixed. Here, $\vert\cdot\vert$ is the Euclidean norm in $\R^2$ and $\Vert\cdot\Vert$ is the $\infty$-norm in $(\R^2)^{N}$ or $(\R^2)^{2N}$ associated to $|\cdot|$, i.e.,
	$$
	\Vert v\Vert=\max_{i=1,\ldots N} |v_i|,\qquad
	\Vert (x,v)\Vert=\max_{i=1,\ldots N} |x_i|+\max_{i=1,\ldots N} |v_i|.
	$$
	
	{Solutions of the control system \eqref{syst} need to be interpreted in the Caratheodory sense, see, e.g., \cite{BressanPiccoli,Sontag}. Existence and uniqueness of the solution is classical, provided that $u(t)\in L^\infty([0,T];\R^N)$: this condition will always be satisfied, as we deal with bounded controls.}
	
	Models of the form \eqref{syst} are particular examples of agent-based models (ABMs). ABMs appear in biology, mathematics, physics, and engineering in order to describe the motion of a collection of $N$ individual entities at the microscopic scale interacting through simple rules. These types of models have been proposed to describe the flocking of birds \cite{Camazine,lukeman,science1999}, the schooling of fish \cite{Aoki,alethea,Bjorn,Hemelrijk:Hildenbrandt,Huth:Wissel},  and swarms of bacteria \cite{KochWhite}, among others. We refer to the surveys \cite{CFTV,survey} for more general models in the area of interacting particle systems in collective behavior. 
	
	Model \eqref{syst} was introduced in \cite{Levine:2000} and extensively studied in \cite{bertozzi,Dorsogna2,Dorsogna} giving a detailed description of patterns and stability properties of particular solutions through numerical experiments. The role of the self-propulsion term of strength $\alpha>0$ versus friction of strength $\beta >0$ is to fix a typical cruise speed for agents. In fact, in the absence of interactions $W=0$ (no potential) and $u=0$ (no control) in \eqref{syst}, except for the unstable equilibrium $v=0$, all trajectories converge to $\vert v_i\vert^2=\frac{\alpha}{\beta}$ along heteroclinic orbits. These terms will actually promote the appearance of particular solutions : \emph{flock} and  \emph{mill} solutions (defined below). Even more complicated solutions as double-mills have been studied in the literature \cite{Dorsogna3}. 
	
	We assume throughout the article that the interaction potential is radial $W(x)=U(\vert x\vert)$, with $U$ of class $C^2$ except possibly at the origin, and that the interactions are negligible for large distances $\lim_{r\to+\infty} U'(r)=0$. Typical potentials used in previous works are Morse potentials of the form $U(r) =- C_A e^{-r/\ell_A} + C_R e^{-r/\ell_R}$, the index A standing for ``attractive" and the index R for ``repulsive". As shown in \cite{bertozzi, Dorsogna}, the interesting region is when $\ell=\frac{\ell_R}{\ell_A}<1$ and $C=\frac{C_R}{C_A}>1$. In this case, the derivative $U'$ of the potential is such that $\vert U'\vert$ is bounded, $U'$ is positive up to some $r_0>0,$ and is then negative and converging to $0$ as $r\rightarrow +\infty$. We will refer to this kind of potentials as bounded repulsive-attractive potentials. Other potentials of interest are power-law potentials \cite{KSUB,BCLR} given by $U(r)=\frac{\vert r\vert^a}{a}-\frac{\vert r\vert^b}{b}$ with {$a>b>0$}, for which we have $U'(0)=-\infty$, thus avoiding collisions due to an increasing repulsion whenever two particles get closer. We will refer to this kind of potentials as unbounded repulsive-attractive potentials.
	
	Another family of ABMs of interest arises when introducing alignment mechanisms in the modeling. A basic example of this family is the Cucker-Smale model introduced in \cite{CuckerSmale1,CuckerSmale2} and further developed in \cite{CFRT,HaLiu,HaTa,mt}, among others. The main phenomenon in those models is the emergence of alignment, i.e., consensus in velocity. Imposing consensus in velocity has also been analysed from the point of view of control \cite{BFK15,CFPT2013,CFPT2015,HK18}. These consensus models also have applications in swarm robotics \cite{CKPP19,CKP21}, social and pedestrian dynamics \cite{ACFK17,ABCK,KZ20,Suttida2}  where control theory is applied with different regulation objectives expressed in both ad-hoc and optimal control designs \cite{albi2021momentdriven,ALBI201886,BAILO20181,Suttida1}.
	
	Despite the simplicity of the model \eqref{syst}, a striking phenomenon regarding the long-time asymptotics of solutions occurs. There are several stable self-organized patterns that emerge from these dynamics depending on the initial data even for the same parameter values and interaction potentials \cite{Dorsogna2,Dorsogna}. More precisely, flock and mill solutions, which are relevant examples of self-organized configurations for the swarming model, appear asymptotically. Flock and mill solutions are not equilibria in the classical sense (with $\dot x_i=\dot v_i=0$ for all $i=1,\ldots,N$), but are rather solutions with specific invariance properties inherited from \eqref{syst}. Flock solutions describe configurations with agents moving by uniform translation: a \emph{flock} is a trajectory $(x(t),v(t))=( x^*+t v^*, v^*)$ in which {$x^*=(x^*_1,\ldots, x^*_N)$ is a vector of $N$ positions and $v^*=( v^*_1,\ldots, v^*_N)$ is a vector of $N$ identical velocities $v^*_1=\ldots=v^*_N$ that moreover satisfy $|v^*_i|=\sqrt{\frac{\alpha}{\beta}}$}. A \emph{flock ring} is a flock in which the position of agents $x_i$ are equally distributed on a circle with a certain radius $R$, i.e., {$x^*=(x^*_1,\ldots, x^*_N)$ is of the form $x^*_i=c+\hat x_i$, where $c\in \R^2$ is the center of the circle and $\hat x_1,\ldots,\hat x_N\in \R^2$ for $i=1,\ldots, N$ are equispaced points on a circle of radius $R$ centered at zero, i.e.
	\begin{equation} \label{e-hatx}
	\hat x_i= R \, \mathcal{R}_\theta \begin{pmatrix}\cos\left(\frac{2\pi i}{N}\right) \\[2mm] \sin\left(\frac{2\pi}{N i}\right)\end{pmatrix},\quad \mathcal{R}_\theta=\begin{pmatrix} \cos\theta& -\sin\theta\\ \sin\theta & \phantom{-}\cos\theta \end{pmatrix}.
	\end{equation}
	for some angle $\theta$.}
	
	A \emph{mill} for \eqref{syst} corresponds to $N$ agents rotating with a constant angular velocity $\omega$ with respect to a center $c$, i.e., 
	$(x_i(t),v_i(t))=(\mathcal{R}_{\omega t}( x^*_i-c)+c,\mathcal{R}_{\omega t}  v^*)$
	for some $(x^*,v^*)\in\R^{4N}$. A \emph{mill ring} is a mill in which the position of agents $x_i$ are moreover equally distributed on a circle with a certain radius $R$, i.e., { $x^*_i=c+\hat x_i$ with $\hat x_i$ given by \eqref{e-hatx}}.  As a consequence, velocities satisfy 
	$$
	v_i^*=\frac{1}{R}\sqrt{\frac{\alpha}{\beta}} \, \hat x_i^\perp
	\qquad\textrm{with}\qquad
	\hat x_i^\perp=\mathcal{R}_{\pi/2} \hat x_i .
	$$
	
	{The linear stability properties of flock and mill rings have been studied in detail in \cite{bertozzi,KSUB}. The nonlinear stability analysis of general flocks and mills has been fully analysed in \cite{ABCV,CHM}. As a consequence, in the vicinity of certain flocks/mills, it can be shown that the system self-organizes towards a flock/mill. The precise notion of vicinity is referred to the mentioned literature.} Notice that the space positions of flock solutions to \eqref{syst} correspond to stationary states of the first order model
	\begin{equation}\label{eq:firstorder}
		\dot{x}_i=-\sum_{\substack{j=1\\j\neq i}}^N \nabla
		W\left(x_j-x_i\right), \quad i=1,\dots, N.
	\end{equation}
Flock shapes, stationary states for \eqref{eq:firstorder}, for different potentials can have many different shapes even for biologically motivated potentials \cite{KSUB,bertozzi,BUKB,CHM2}, and their regularity heavily depends  on the repulsion strength at the origin, see \cite{BCLR,BCLR2,CDM}. Characterizing all possible mill and flock profiles for a given potential is equivalent to characterizing all possible stationary states of the first order system \eqref{eq:firstorder} or related equations. This difficult problem has not been solved except in very particular choices of the parameters for the power-law potentials. It can be shown that for a repulsive-attractive potential there is a unique flock and mill ring solution and that the radius is characterized uniquely by balances of the relevant forces: attraction, repulsion and centrifugal forces \cite{BCLR,ABCV}. However, showing that they are the unique flock or mill solution is a challenging problem. It is possible to find potentials for which stable mills exist and they are not rings by numerical experiments. Moreover, some compactly supported potentials generically allow the existence of \emph{flock and mill clusters}, i.e., clusters of particles in which each cluster is a mill, or different flocks in separate directions. For the specific case of flock and mill rings, the radius $R$ is characterized by being a solution to
	\begin{equation}\label{e-radius}
		\sum_{p=1}^{N-1}\sin\Pt{\frac{p\pi}N}\tilde U'\Pt{2R\sin\Pt{\frac{p\pi}N}}=0\,,
	\end{equation}
	where $\tilde U(r)=U(r)-\omega^2\frac{r^2}2$, see \cite{bertozzi,ABCV,CHM}. Flock solutions correspond to  $\omega=0$.
	
	Our main goal in this work is to show that constructive controls can be designed to steer the system from any initial data to these various self-organized configurations in interacting particle systems of collective behavior. The main strategy is to prove that the system \eqref{syst} enjoys interesting controllability properties, such as  being able to: steer the system from any initial condition to some/any flock and/or mill; keep the system close to a flock or a mill with an appropriate feedback law; pass from a flock to a mill or conversely. The control $u$ is assumed to satisfy the constraint $\|u\|\leq M$ with the minimal threshold $M>0$ being clearly identified.
	
	The rest of the paper is structured as follows. Section \ref{mainsec} explains in details the main results and the main novelty of our approach: mixing different control techniques with a deep understanding of the heteroclinic connections in these models. In Section \ref{sec_num}, we recall some known stability properties of flocks and mills, together with proposing different feedback designs for transitioning to flock and mill configurations. The control design is guided by controllability results, local stability properties and heteroclinic connections, and is enriched by the use of numerical optimal controls. {Even if the main contribution of the paper consists of the results in Section \ref{mainsec}, which characterize the controlled transition between different flocks and mills, as the proofs indicate, these transitions are constructed in steps as a concatenation of different feedback controls. Section \ref{sec_num} goes beyond the existence of these controls and provides concrete constructions for such feedback laws, making use of both  stability properties of the system and optimal control elements.} In Sections \ref{sec_proof_mainthm} and \ref{sec_proof_thm2}, we provide a complete and detailed proof of Theorems \ref{mainthm} and \ref{mainthm_variant}.
	
	
	\section{Main results}\label{mainsec}
	
	Our two main results, Theorem \ref{mainthm} in Section \ref{sec_bounded_interactions}, and Theorem \ref{mainthm_variant} in Section \ref{sec_unbounded_interactions}, deal with bounded and unbounded interaction potentials (as defined in the introduction), respectively.
	For each statement, we present a brief sketch of our control strategies with a full proof in the last sections.

	\subsection{First main result: bounded interactions}\label{sec_bounded_interactions}
	In our first main result for bounded repulsive-attractive potentials, we assume that our potential is radial, of class $C^2$, with bounded interactions, i.e., $\sup_{r\geq 0} U'(r)<+\infty$ and negligible interactions at $\infty$, i.e., $\lim_{r\to+\infty} U'(r)=0$. Since the potential is $C^2$ at the origin and radial then $U'(0)=0$. This ensures classical well-posedness of \eqref{syst} (see Remark \ref{rem_potential} further).
	
	\begin{theorem}\label{mainthm}[Control for bounded interactions]
		Let $U:[0,+\infty)\rightarrow\R$ such that $W(x)=U(|x|)$ is of class $C^2$ satisfying $U'(0)=0$, $\lim_{r\to+\infty} U'(r)=0$ and:
		\begin{enumerate}[label=$\bf (U_{\arabic*})$]
			\item\label{U1} $C^2$-boundedness: there exist a positive constant $C$ such that $|U(r)|+|U'(r)|+|U''(r)|<C$ for every $r\in[0,+\infty)$.
		\end{enumerate}
		If the upper bound $M$ for the control action is such that
		\begin{equation}\label{hyp_M}
			M>  M_{\alpha,\beta}=\sqrt\frac{4\alpha^{3}}{27\beta} 
		\end{equation}
		then, given any $\bar v\in\R^2$ such that $\vert\bar v\vert = \sqrt{\frac{\alpha}{\beta}}$, {there exists $\bar x=(\bar x_1,\ldots,\bar x_N)\in \R^{2N}$ such that} the control system can be steered, in sufficiently large time and with a feedback control, from any initial condition to any neighborhood of the flock $(\bar x_1+t\bar v,\bar v),(\bar x_2+t\bar v,\bar v),\ldots,(\bar x_N+t\bar v,\bar v)$ with a control satisfying $\|u\|\leq M$.
		
		\noindent Denoting by $M_F=\sup_{r>0} \vert U'(r)\vert$. Under the stronger assumption
		\begin{equation}\label{hyp_M_stronger}
			M >  \max \left( M_{\alpha,\beta} , \ M_F \right) ,
		\end{equation}
		the control system can be steered, in sufficiently large time and with a feedback control, from any initial condition to any neighborhood of any flock, flock ring, mill, mill ring.
	\end{theorem}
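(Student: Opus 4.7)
The plan is to split the argument into two phases that mirror the structure of the target configurations. In a first phase I drive the velocities to a common cruise velocity $\bar v$ with $|\bar v|=\sqrt{\alpha/\beta}$ and let the position dynamics relax along the gradient flow \eqref{eq:firstorder} to a stationary profile, obtaining the weaker statement. In a second phase I connect the reached flock to any prescribed target by a quasi-static deformation along admissible flocks/mills, using the stronger bound $M>M_F$ to realise the required feedforward. A preliminary observation is that $M_{\alpha,\beta}=\sqrt{4\alpha^{3}/27\beta}$ equals $\max_{s\ge 0} s(\alpha-\beta s^{2})$, so $M>M_{\alpha,\beta}$ is exactly the authority needed to dominate the scalar self-propulsion on speeds below the cruise value; analogously $M>M_F=\sup |U'|$ dominates the per-agent interaction force $F_i(x)$, whose norm is bounded by $\tfrac{N-1}{N}M_F$ uniformly in $x$ thanks to the $1/N$ normalisation in \eqref{syst2b}.

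\textbf{Phase 1: alignment of velocities.} For the weaker statement I use a Lyapunov/LaSalle argument based on the candidate
\[
V(v)=\frac12\sum_{i=1}^{N}|v_i-\bar v|^{2}.
\]
A preliminary a priori estimate, using that $(\alpha-\beta|v|^{2})v$ is strongly dissipative for $|v|>\sqrt{\alpha/\beta}$, confines every $v_i(t)$ to a bounded invariant set in finite time. On that set I design a saturated feedback of the form $u_i=\pi_{M}\bigl(-(\alpha-\beta|v_i|^{2})v_i-\lambda(v_i-\bar v)\bigr)$, where $\pi_{M}$ denotes projection on the ball of radius $M$ and $\lambda>0$ is small. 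Using $\sum_{i} F_i(x)=0$ one checks that the augmented functional $V+\frac{1}{2N}\sum_{i\neq j}W(x_i-x_j)$ is non-increasing, with $\dot{V}+\dot{U}_{\rm tot}=-\lambda V$ outside the diagonal $\{v_1=\dots=v_N=\bar v\}$; here the condition $M>M_{\alpha,\beta}$ is exactly what guarantees that saturation does not destroy dissipation. LaSalle's principle then forces $v(t)\to(\bar v,\dots,\bar v)$, after which the position dynamics reduce (modulo the translation $t\bar v$) to \eqref{eq:firstorder}, whose $\omega$-limit set provides a stationary $\bar x$ and the required flock.

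\textbf{Phase 2: quasi-static deformation.} Starting from a small neighbourhood of some flock, any prescribed flock, flock ring, mill or mill ring is reached by a continuous one-parameter family $s\mapsto(x^{*}(s),v^{*}(s))$, $s\in[0,1]$, of admissible profiles: centre, orientation, angular velocity and radius (via \eqref{e-radius}) are varied continuously, and the flock-to-mill passage is obtained by interpolating $\omega(s)$ from $0$ to the target. Along this path the exact feedforward
\[
u_i^{*}(s)=\dot v_i^{*}(s)-\bigl(\alpha-\beta|v_i^{*}(s)|^{2}\bigr)v_i^{*}(s)+F_i\bigl(x^{*}(s)\bigr)
\]
maintains the profile, and since $|v_i^{*}(s)|\le\sqrt{\alpha/\beta}$ and $|F_i|<M_F$, the hypothesis $M>\max(M_{\alpha,\beta},M_F)$ gives $|u_i^{*}(s)|<M$ uniformly in $s$. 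Local exponential stabilizability of the linearisation around each flock/mill ring (cf.\ \cite{bertozzi,KSUB,ABCV,CHM}) lets me add a stabilising correction $K(s)\bigl((x,v)-(x^{*}(s),v^{*}(s))\bigr)$; a standard quasi-static argument -- letting $s$ vary more slowly than the decay rate of the closed-loop linearisation -- then steers the true trajectory into any given neighbourhood of the target.

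The main obstacle is the flock-to-mill transition. While flock-to-flock and mill-to-mill deformations can be performed on the cruise circle where the self-propulsion term vanishes, turning a common velocity into a circulating one requires some $v_i$ to traverse a region where the scalar self-propulsion attains its maximum $M_{\alpha,\beta}$; the controller must cancel this term while simultaneously counteracting the interaction force of norm up to $M_F$, which is precisely the content of \eqref{hyp_M_stronger} and shows that the bound is sharp for this strategy. A final technical point is to verify that the linearisation is stabilisable at every $s$ and that the gain $K(s)$ depends continuously on $s$; this follows from the explicit spectral decomposition of the linearised dynamics around flock/mill rings carried out in \cite{bertozzi,ABCV}, at least generically in the parameters.
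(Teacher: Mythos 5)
The central gap is in your Phase 2. Your feedforward $u_i^{*}(s)=\dot v_i^{*}(s)-(\alpha-\beta|v_i^{*}(s)|^{2})v_i^{*}(s)+F_i(x^{*}(s))$ cancels the self-propulsion term \emph{and} the interaction force simultaneously, so its admissibility requires a bound of the form $M>M_{\alpha,\beta}+M_F$ (plus the slow-deformation term), which is strictly stronger than the hypothesis \eqref{hyp_M_stronger}: that hypothesis is a \emph{maximum}, not a sum, and your own "main obstacle" paragraph, which says that cancelling a self-propulsion of size up to $M_{\alpha,\beta}$ while counteracting forces of size up to $M_F$ is "precisely the content of \eqref{hyp_M_stronger}", misreads it. The paper's proof is organized exactly so that the two prices are never paid at once: all repositioning (to ring or other spatial configurations) is carried out while $v\simeq 0$, where the self-propulsion term is negligible and only $F_i$ (bounded by $M_F<M$) must be countered; the subsequent speed-up to cruise speed is obtained by \emph{exploiting}, not cancelling, the self-propulsion term along the heteroclinic orbits of $\dot v_i=(\alpha-\beta|v_i|^2)v_i$ (Step 3.1) or of $\dot v_i=(\alpha-\beta|v_i|^2)v_i-\frac{|v_i|^2}{R}\frac{x_i}{|x_i|}$ (Step 3.3), the control supplying only force cancellation (small near $\Omega_\eps$, or $F_i$ plus the centripetal correction). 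Incidentally, your claim that the flock-to-mill passage forces some $v_i$ through speeds where the self-propulsion attains $M_{\alpha,\beta}$ is not correct either (rotating directions at constant cruise speed keeps $(\alpha-\beta|v_i|^2)v_i=0$); but as written, your strategy simply does not establish the theorem under the stated bound.

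Phase 1 also has gaps. The assertion that once $v_i\to\bar v$ "the position dynamics reduce to \eqref{eq:firstorder}" is wrong: with all velocities equal, the relative positions freeze, nothing drives $x$ along the gradient flow, and its $\omega$-limit set is irrelevant. To conclude that you are near a flock you must additionally prove $F_i(x(t))\to 0$ (the paper's Lemma on forces, via a Barbalat-type argument using Lipschitzness of $\nabla W$); without this, velocity alignment alone does not place the system near any flock. Moreover, LaSalle's principle is not available as invoked, since trajectories are not known to be bounded in $x$ and the functional is not proper -- the paper flags this as the main difficulty and replaces LaSalle by direct estimates; your identity $\frac{d}{dt}\bigl(V+U_{\mathrm{tot}}\bigr)=-\lambda V$ plus a Barbalat argument could substitute, but only after checking the sign of the dissipation on the saturated set (and choosing $\lambda$ small enough that no saturation occurs for $|v_i|\le\sqrt{\alpha/\beta}$), which you assert rather than verify. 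Finally, in Phase 2 you should not rely on open-loop spectral stability of flocks/mills "generically in the parameters": the theorem claims \emph{any} flock or mill, and no such stability is needed, since the linearization is fully actuated, the Kalman condition holds trivially, and the paper's slowly-varying pole-shifting argument applies at every profile.
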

	
	\begin{remark}\label{rem_potential}
	The result can be generalized to potentials satisfying $U'(0)<0$, i.e., repulsive at $0$ but nonsmooth at $0$: for instance, in the case of the widely used Morse potentials \cite{Dorsogna} in the biologically reasonable, one has bounded interactions up to $0$ even if the potential is not $C^2$ at the origin. In Theorem \ref{mainthm} above, we have assumed that $U'(0)=0$ to ensure { that $\nabla W(x_i-x_j)$ is Lipschitz with respect to $x$, hence existence and uniqueness of solutions to \eqref{syst} in the Caratheodory sense is guaranteed (see \cite{BressanPiccoli,Sontag})}. In the more general case where $U'(0)<0$, collisions may occur in finite time and thus well-posedness is not a priori ensured. {Anyway, it is always possible to slightly modify our feedback controls in order to avoid the origin, and thus avoid problematic points for local well-posedness, so essentially we can always assume without loss of generality that the radial potential is of class $C^2$ at the origin if it has bounded interactions, i.e., if $U'(0)$ is finite or $M_F<\infty$.} 
	\end{remark}

	\begin{remark}[The role of the assumptions]
		Condition \eqref{hyp_M} means that the control can counteract the natural tendency of the system to stabilize $|v|$ to $\sqrt\frac\alpha\beta$.
		The value $M_{\alpha,\beta}$ corresponds to the maximum of the function $s\to \alpha s-\beta s^3$ on the half line $s\geq 0$, which is attained at $v=\sqrt{\frac{\alpha}{3\beta}}$.  Under the condition \eqref{hyp_M_stronger}, one can moreover counteract the potential interactions, which allows us to design controls steering the system to any mill. Actually, when the potential allows the existence of cluster mills {(i.e., separated groups of agents, each of them forming a mill)}, it is even possible to steer the system to any such cluster mill configuration.
	\end{remark}

	The control strategy to achieve these various objectives can be made explicit and we will provide it in a \emph{feedback} form, making it particularly convenient to implement in practice. It can even be provided in a \emph{componentwise sparse feedback} form, provided that $M$ is large enough. Hereafter, we explain the controllability strategy, by sketching the proof of Theorem \ref{mainthm} (full detail of the proof is given in Section \ref{sec_proof_mainthm}).
	Given any $\eps>0$, we set
	$$
	\Omega_\eps = \{ (x,v)\in\R^{4N}\ \mid\ v=0,\ \|F(x)\|\leq \eps \} .
	$$
	When $u=0$, the set $\Omega_0$ (for $\eps=0$) is invariant under the dynamics. The $x$-components of each point of $\Omega_0$ are indeed the flocks of the uncontrolled system. Note that, for $\eps>0$ small, $\Omega_\eps$ consists both of topological neighborhoods of $\Omega_0$ in the $x$-variable and of some components ``at infinity'', where $F$ is small because $\lim_{r\to+\infty} U'(r)=0$.
	
	\medskip
	
	\paragraph{\bf Step 1: reaching $\Omega_\eps$} The first step of our strategy consists of steering the system from its given (arbitrary) initial condition to the set $\Omega_\varepsilon$, {where $\eps>0$ is a fixed parameter. It needs to be chosen sufficiently small to ensure ``good approximations'' of the nonlinear dynamics \eqref{syst} by the linearized one}. With this goal, we use the Jurdjevic-Quinn method \cite{JQ}, which is a very powerful approach in control design to derive feedback controls which moreover enjoy instantaneous optimality properties (see \cite{CFPT2013,CFPT2015} as well as \cite{CPRT2,CPRT_A2017} for its application to Cucker-Smale multi-agent models). This is done by differentiating with respect to time an appropriate Lyapunov-type functional, then choosing adequately the feedback control, and finally using arguments close to the LaSalle invariance principle.
	
	\medskip

	\subparagraph{\bf Step 1.1: Jurdjevic-Quinn stabilization}
	Defining the Lyapunov functional 
	$$
	V = \frac{1}{2} \sum_{i=1}^N \vert v_i\vert^2 + \frac{1}{2N} \sum_{\substack{i,j=1 \\ i\neq j}}^N W(x_i-x_j) ,
	$$
	along the trajectories we have
	$$
	\dot V = \sum_{i=1}^N (\alpha-\beta \vert v_i\vert^2 ) \vert v_i \vert^2 + \langle v_i, u_i\rangle\,,
	$$
	which leads us to define appropriate feedback controls $u_i$ making $V$ decrease: roughly speaking, we would like to take $u_i=-M\frac{v_i}{\vert v_i\vert}$ when $\vert v_i\vert\leq\sqrt\frac{\alpha}{\beta}$ and $0$ otherwise. Then, using arguments similar to those used in the LaSalle invariance principle, we obtain convergence to $\Omega_\varepsilon$. 
	Although this gives the main idea, the complete argument in Section \ref{sec_proof_step1} is not so easy. The main difficulty is that $V$ is not proper  (i.e., $V$ is not infinite at infinity), even taking the quotient with respect to translations. This does not ensure that the system asymptotically reaches a neighborhood $\Omega_0$ as in the classical LaSalle principle, as it can also converge to a component at infinity.
	
	\medskip

	\subparagraph{\bf Step 1.2: Reaching $\Omega_\eps$ in finite time} 
	Once $(x,v)$ is sufficiently close to $\Omega_\eps$ at time $T$, both velocities and forces are small. We then apply the control 
	$$
	u_i=-(\alpha -\beta |v_i|^2)v_i+F_i(x)-\eta\frac{v_i}{|v_i|}
	$$
	with $\eta>0$ small to steer each component of $v(T)$ such that $v_i\left(T+\tfrac{|v_i(T)|}{\eta}\right)=0$. Note that $\vert u_i\vert$ remains small because we are near $\Omega_\eps$. By applying this control to each agent, we reach $\Omega_\eps$ in finite time.
	
	\medskip

	\paragraph{\bf Step 2: Local controllability near $\Omega_\eps$} 
	At the end of Step 1, the system is in $\Omega_\eps$. Since $v=0$ and $F(x)\simeq 0$ there, we can set 
	\begin{equation}\label{controlnear}
		u_i=-(\alpha-\beta \vert v_i\vert^2 ) v_i + F_i(x)+w_i
	\end{equation}
	and consider $w$ as a new control, subject to the constraint $\Vert w\Vert\leq M/2$, and thus focus on the very simple control system
	\begin{equation}\label{very_simple_control_system}
			\dot x_i = v_i , \qquad \dot v_i = w_i ,
	\end{equation}
	near $v=0$. It is obvious to generate feedback controls $w$, satisfying $\Vert w\Vert \leq M/2$, steering this simplified control system from any point in $\Omega_\eps$ to any other point in a local neighborhood. Note that $\|u\|\leq M$, since the agents remain near $\Omega_\eps$.
	Notice that we cannot assure at this stage that two disconnnected components of $\Omega_\eps$ can be joined by controls of this form, since the size of the control $M$ is disconnected from the size of the force, due to the interaction potential $U$. In order to choose particular spatial configurations such as flocks or mill rings we control the interaction potential in the next steps by increasing the value for the constraint $\|u\|\leq M$ to $M>M_F$.
	
	\medskip

	\paragraph{\bf Step 3: reaching flocks and mills}
	
	\subparagraph{\bf Step 3.1: Reaching flocks} Here, we are still under the assumption $M>  M_{\alpha,\beta}$. It follows from Step 2 that we can steer the control system to a point near $\Omega_\eps$ which is such that $F_i(x)\simeq 0$ and $v_i=\nu\bar v$ for every $i\in\{1,\ldots,N\}$ for some $\bar v$ (that we can choose arbitrarily) such that $\vert\bar v\vert=\sqrt\frac{\alpha}{\beta}$ and some $\nu>0$ small. In other words, we preliminary place the system in a configuration in which all components $v_i$ are small and equal.  Note that, in this preliminary step, we are not free to choose the spatial components $x_i$ where we want such as a predetermined flock profile. This will be done in Step 3.3 below, once we assume the size of our control $M$ may overcome the total maximal interaction force. 
	
After having set $v_i=\nu\bar v$, we keep the control active but small to counteract interaction forces ($u_i=F_i\simeq 0$) and we let the system evolve. We have $v_i(t)=v_j(t)$ for all $i,j$ and thus $x_i(t)-x_j(t)$ remains constant. Each variable $v_i$ evolves according to $\dot v_i = (\alpha-\beta\vert v_i\vert^2)v_i$, starting at the initial value $\nu\bar v$, and hence $v_i(t)\rightarrow\bar v$ as $t\rightarrow+\infty$. This is a motion along an heteroclinic orbit, see Section \ref{sec_proof_step3}.
	
	Note that, along this trajectory, in case of unstability the motion can be stabilized by using a feedback control: one linearizes the system along the nominal trajectory, and then correct errors by feedback.
	
	\medskip

	\subparagraph{\bf Step 3.2: Passing from a flock velocity to another flock velocity} We now have the control system in a flock and we may want to steer the system to a flock with the same relative positions and a different velocity, without necessarily starting the whole procedure from scratch (i.e., achieve Steps 1, 2, 3.1 again).
	
	This can be done by {using the technique of} quasi-static deformation (see \cite{coron-trelat04,coron-trelat06} and see Section \ref{sec_proof_step3} for details) as follows. 
	{Let us consider the initial flock $(\bar x(t)=\bar x(0)+\bar v^0 t,\bar v^0)$ meaning that all velocities are equal to $\bar v^0$ with $\vert\bar v^0\vert=\sqrt{\frac{\alpha}{\beta}}$. Assuming that the system is near this initial flock, we want to steer it to (or near to) the target flock $(\bar x(t)=\bar x(0)+\bar v^1 t,\bar v^1)$ with $\vert\bar v^1\vert=\sqrt{\frac{\alpha}{\beta}}$. The idea is to deform the flock sufficiently slowly in time in order to be able to compensate for the small errors by an appropriate feedback law. Recall that, at any given flock, we have $(\alpha-\beta \vert v_i\vert^2 ) v_i=0$ and $F_i(x)=0$.
Let us consider a continuous path $\tau\in[0,1]\mapsto\bar v(\tau)$ satisfying $|\bar v(\tau)|=\sqrt\frac\alpha\beta$ for every $\tau \in[0,1]$ and such that $\bar v(0)=\bar v^0$, the initial velocity of the flock, and $\bar v(1)=\bar v^1$, the velocity of the target flock. Follow the path slowly-in-time, by taking $\tau=\kappa t$ with $\kappa>0$ small enough. Now, along this path, the simplified control system is not autonomous linear but is anyway a \emph{slowly-varying} linear control system, obviously satisfying the Kalman controllability condition. Then, by pole-shifting, it is possible to design feedback controls, tracking this path and thus steering the control system, in time $1/\varepsilon$, to any point of a neighborhood of $\bar v_1$. More details are given in Section \ref{sec_proof_step3}.
Note that stabilization by classical pole-shifting under the Kalman controllability condition may fail in general for non-autonomous linear control systems but remains valid if the matrices of the system are sufficiently slowly varying in time (see \cite[Chap. 9.6]{khalil}). 
	
	To summarize, by quasi-static deformation, we can bend the motion of the flock by moving slowly the value of $\bar v$ and thus steer the system to another flock. See a numerical example in Figure \ref{quasi} below.

	\medskip
	
	\subparagraph{\bf Step 3.3: Reaching mills} We now make the additional assumption that $M> M_F$.
	At the end of Step 1, the system is in $\Omega_\eps$. Since $v=0$ there, we can again take the control \eqref{controlnear} and consider $w$ as a new control, but now, in contrast to the previous steps, since $F(x)$ will not remain small, $w$ is subject to the constraint $\Vert w\Vert\leq \eta$ for some $\eta>0$ {sufficiently small to ensure the constraint $\|u\|\leq M$}. We can however still focus on the simplified control system \eqref{very_simple_control_system} near $v=0$. It is obvious to generate feedback controls $w$, satisfying $\Vert w\Vert \leq \eta$, steering this simplified control system from any point with $v\simeq 0$ to { a configuration in which all agents are equidistributed on a circle (eventually promoting a mill ring). More precisely, this means that} all $x_i$ are placed along a circle of radius $R_\textrm{mill}$ (where $R_\textrm{mill}$ is a value of a possible mill) and all speeds are given by $v_i = \nu x_i^\perp$ for some $\nu>0$ small. This can be done either by quasi-static deformation as before, or by optimal control. Afterwards, we choose the control 
	\begin{equation*}
		u_i=F_i-\frac{|v_i|^2}{R_\textrm{mill}} \frac{x_i}{|x_i|}
	\end{equation*}
	to ensure that each agent undergoes the correct centripetal force {(i.e., $\dot v_i=-\frac{|v_i|^2}{R_\textrm{mill}} \frac{x_i}{|x_i|}$)} and moves along the circle of radius $R_\textrm{mill}$. We let the system evolve and observe that $|x_i(t)-x_j(t)|$ remains constant for each pair $i,j$, as in Step 3.1. Moreover, $v_i$ evolves according to 
	$$
	\dot v_i = (\alpha-\beta\vert v_i\vert^2)v_i-\frac{|v_i|^2}{R_\textrm{mill}} \frac{x_i}{|x_i|}
	$$ 
	along an heteroclinic orbit, and $v_i(t)-\sqrt\frac{\alpha}{\beta} \, x_i(t)^\perp \rightarrow 0$ as $t\rightarrow+\infty$, i.e., we have convergence to a mill. Such statements can be easily checked in polar coordinates.
	
	Note that we have steered the system to a mill of which we can choose the center. Moreover, we can even steer the system to some arbitrary mill clusters: to do that, it suffices to choose appropriate mill clusters that are sufficiently far one from each other.

	\medskip

	\subparagraph{\bf Step 3.4: Reaching flock rings} To reach a flock ring, that is, a flock where all agents are equidistributed along a circle of radius $R_\textrm{mill}$, we first proceed as in Step 3.3, except that the target velocity $v_i = \nu x_i^\perp$ is replaced by $v_i = \nu \bar v$ for some $\bar v$ such that $\vert\bar v\vert=1$ (the desired direction of the flock). We then follow Step 3.1.

	\medskip

	\subparagraph{\bf Step 3.5: Passing from any flock or mill to any other} By the same procedure as in Step 3.2, it is now clear that we can pass from any flock (or flock ring) or mill to any other, without having to restart the whole procedure at Step 1.

	\medskip

	\paragraph{\bf Step 4: Sparsification}
	The feedback controls defined above are not componentwise sparse, in the sense that, at any instant of time, several (usually all) components of the control are active. In order to keep a minimal amount of intervention at any instant of time, the notion of componentwise sparse control has been introduced in \cite{CFPT2013, CFPT2015} (see also \cite{PRT2015} for the corresponding notion in infinite dimension), meaning that, at any fixed time, at most one component of the control can be active. This notion models the action of one leader on a group of agents, like a single dog acting on a flock of sheep.
	It has been shown in \cite{CPRT_A2017} how to design, for dissipative control systems, a componentwise sparse feedback control, starting from any feedback control. This can be done, for instance, by applying an averaging procedure. This is however at the unavoidable price of requiring that $M>N M_{\alpha,\beta}$. 
	Anyway, this ``sparsification" procedure is general enough to produce sparse feedback controls, steering the control system \eqref{syst} from any initial condition to any flock or mill, in sufficiently large time.

	\medskip

We conclude with several remarks on extensions of these results.

\begin{remark}[Regularity of the control] All along the proof, we use controls that are either in a feedback form $u_i=u_i(x_i,v_i)$ and or in an open-loop form $u_i=u_i(t)$. In particular, feedback controls appear for stabilization in \eqref{e-controlO} and are always Lipschitz with respect to $(x_i,v_i)$, ensuring existence and uniqueness of Caratheodory solutions of \eqref{syst} (see \cite{BressanPiccoli,Sontag}). Both open-loop and feedback controls can be smoothened and thus can be chosen as $C^\infty$ functions, with no major difference with respect to the results provided here. Indeed, they can be smoothened even with an arbitrarily small increase of the $C^1$ norm, ensuring that the constraint \eqref{hyp_M_stronger} is always satisfied. \FR{This can be seen as a consequence of the fact that, for systems with Lipschitz vector fields,  $L^1$ convergence of controls implies convergence of trajectories, see, e.g., \cite{BressanPiccoli,trelatbook2005}. Indeed a Lipschitz control $u(t,x)$ can always be approximated in $L^1$ by a sequence of $C^\infty$ functions $u^n(t,x)$ satisfying $\|u^n-u\|_{L^1}\to 0$ and the associated solution $(x^n,v^n)$ converges to the solution $(x,v)$.}
\end{remark}

	\begin{remark}[On exact controllability]
	In Theorem \ref{mainthm}, we have established asymptotic feedback controllability to flocks or mills. Since the linearized system around any flock or mill satisfies the Kalman condition (we are here in finite dimension), it follows that the system \eqref{syst} is locally controllable around flocks and mills. Therefore, as soon as the agents are close enough to a flock (or a mill), one can always design an open-loop control steering the system in finite time exactly to the flock (or to the mill). In other words, in the framework of Theorem \ref{mainthm}:
	\begin{itemize}
		\item Under \eqref{hyp_M}, the system \eqref{syst} can be steered in large time to any flock.
		\item Under \eqref{hyp_M_stronger}, the system \eqref{syst} can be steered in large time to flocks or mills.
	\end{itemize}
	This is a global controllability result to flocks and mills.
	\end{remark}
	
	\begin{remark}[On sharpness of the assumptions and black hole phenomenon]
	In Theorem \ref{mainthm}, we have assumed that the value of $M$ is large enough. If $M$ is too small, then it may happen that we do not have a sufficiently strong control to achieve our objectives. Given that, for certain potentials, some of the flocks or mills seem to be strongly attractive, when $M$ is too small we may even fall in the ``black hole phenomenon" (see \cite{PPT2019}). This simply means that the attraction power of a given mill would be too strong to be countered by the control: one then cannot escape from such a basin of attraction. This consideration shows that our assumption on $M$ is, in some sense, unavoidable. However, it is likely that, for some specific classes of potentials, the assumption can be weakened.
	\end{remark}
	
	\begin{remark}[Mean-field limit]\label{rem:mf}
	All the results in Theorem \ref{mainthm} related to feeback controls are still valid for smooth solutions of the mean-field partial differential equation of Vlasov-type obtained as the formal limit $N\to+\infty$ of \eqref{syst} (see \cite{CPRT2,PRT2015}). This is due to the fact that feedback controls are Lipschitz functions of the state (see \cite{PR18}). The singular character of the flock and mill solutions as solutions of the partial differential equation is not a difficulty, since smooth solutions exist globally and only concentrate in the velocity variable as $t\to+\infty$. We do not provide any details.
	\end{remark}

	\subsection{Second main result: unbounded interactions}\label{sec_unbounded_interactions}
	In Theorem \ref{mainthm}, we have assumed that $\vert U'\vert$ is bounded. However, such an assumption does not involve the case of a potential that explodes near $r=0$, i.e., satisfying $U(0)=-\infty$. Such potentials are often considered in collective behavior models, since they reflect the fact that two agents cannot meet: such an assumption rules out shocks.
	In this case, we can refine Theorem \ref{mainthm} as follows.
	
	\begin{theorem}\label{mainthm_variant}
		Let $U:(0,+\infty)\rightarrow\R$ generating a radial potential $W(x)=U(|x|)$ of class $C^2$ except at the origin, $\lim_{r\to+\infty} U'(r)=0$ satisfying:
		\begin{enumerate}[label=$\bf (U_{\arabic*})$]
			\setcounter{enumi}{1}
			\item\label{U2} Repulsiveness at $0$: $\lim_{r\to 0}U'(r)=-\infty$ and for every $R>0$ there exist a positive constant $C(R)$ such that $|U(r)|+|U'(r)|+|U''(r)|<C(R)$ for every $r\in[R,+\infty)$.
		\end{enumerate}
		\begin{enumerate}[leftmargin=*]
			\item If $M>  M_{\alpha,\beta}$,
			then the control system \eqref{syst} can be steered, in sufficiently large time and with a feedback control, to any neighborhood of any flock.
			\item Set $\tilde M_F=\sup_{r>0} U'(r)$ and $\tilde M_N=\sup\{ |U'(r)|\,: \, r>2\sin(\tfrac{\pi}N) \bar R\}$. Under the stronger assumption $M>  \max\left(M_{\alpha,\beta}  + \tilde M_F, \tilde M_N\right)$
			for some $\bar R>0$, the control system \eqref{syst} can be steered, in sufficiently large time and with a feedback control, from any initial condition to any neighborhood of any flock ring with radius larger than $\bar R$.
			\item Under the stronger assumption $
				M>  \max\left(M_{\alpha,\beta}  + \tilde M_F,\tilde M_N+\frac{\alpha}{\beta \bar R}\right) 
			$,
			for some $\bar R>0$, the control system \eqref{syst} can be steered to any neighborhood of any mill ring with radius larger than $\bar R$.
		\end{enumerate}
	\end{theorem}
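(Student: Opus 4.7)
The plan is to adapt the four-step strategy of Theorem~\ref{mainthm} to the unbounded-potential setting; the genuinely new issue is to maintain a uniform lower bound on pairwise distances $|x_i-x_j|$ throughout the procedure, so that the singular values of $U$ and its derivatives stay controlled. For Step~1 (reaching $\Omega_\eps$) I would reuse the Jurdjevic--Quinn Lyapunov functional $V=\tfrac12\sum_i|v_i|^2+\tfrac{1}{2N}\sum_{i\neq j}W(x_i-x_j)$ and the same feedback $u_i\simeq -Mv_i/|v_i|$. Because $W(x_i-x_j)\to+\infty$ as $x_i\to x_j$ under hypothesis~\ref{U2}, every sublevel set of $V$ lies in $\{|x_i-x_j|\geq\delta_0\}$ for some $\delta_0>0$ depending only on the initial value of $V$; this yields Carath\'eodory well-posedness along the stabilizing trajectory and lets the LaSalle-type argument of Theorem~\ref{mainthm} reach $\Omega_\eps$ in finite time under only $M>M_{\alpha,\beta}$.

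With $(x,v)\in\Omega_\eps$ (null velocities, small forces, and a uniform lower bound on pairwise distances inherited from Step~1), Step~2 is unchanged: $u_i=-(\alpha-\beta|v_i|^2)v_i+F_i(x)+w_i$ reduces the local problem to the double integrator, and a small feedback $w$ steers it. For Part~1 I would then repeat Step~3.1 verbatim: bring the configuration to $v_i=\nu\bar v$ for all $i$, cancel the (small) interaction by $u_i=F_i$, and let the free dynamics carry each $v_i$ along the heteroclinic orbit to $\bar v$. Since $x_i-x_j$ is frozen during this phase, no collision can occur and $|F_i|$ stays small, so the total action remains below $M$ whenever $M>M_{\alpha,\beta}$.

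For Parts~2 and~3 the target is a ring of radius $R\geq\bar R$, whose minimum pairwise distance is $2R\sin(\pi/N)\geq 2\bar R\sin(\pi/N)$. I would reach such a target from $\Omega_\eps$ by a quasi-static deformation along a smooth path of ring configurations of increasing radius, exactly as in Step~3.2. Along the deformation the control budget must cover: (i) cancellation of self-propulsion, at most $M_{\alpha,\beta}$; (ii) compensation of the attractive part of the interaction, bounded by $\tilde M_F$; and (iii) the full pointwise interaction force once all pairwise distances exceed $2\bar R\sin(\pi/N)$, bounded by $\tilde M_N$. This is precisely the threshold $M>\max(M_{\alpha,\beta}+\tilde M_F,\tilde M_N)$ of Part~2. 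For Part~3 one additionally supplies the centripetal acceleration $|v_i|^2/R=\alpha/(\beta R)\leq\alpha/(\beta\bar R)$ via $u_i=F_i-(|v_i|^2/R)\,x_i/|x_i|$, which accounts for the extra term $\alpha/(\beta\bar R)$; the heteroclinic argument of Step~3.3, written in polar coordinates, then yields asymptotic convergence to the target mill ring as $t\to+\infty$.

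The main obstacle I expect is the bookkeeping of pairwise distances along the quasi-static phase: one must choose a smooth path of intermediate ring-like configurations on which no two agents ever come too close, and verify that the pole-placement tracking estimate for the slowly-varying linearized system stays uniform in the deformation parameter despite the growth of $|\nabla^2 W|$ near the singularity. Choosing monotonically expanding concentric ring configurations and using the initial lower bound $\delta_0$ from Step~1 reduces this difficulty to a uniform bound by $\tilde M_N$, which is exactly what the hypotheses of Parts~2--3 guarantee; once this geometric bookkeeping is in place, the remaining control-theoretic ingredients (Jurdjevic--Quinn, quasi-static deformation, pole placement, heteroclinic asymptotics) transfer from Theorem~\ref{mainthm} with only quantitative changes.
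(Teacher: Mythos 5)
Your treatment of Part 1 follows the paper's route (Jurdjevic--Quinn with the original potential, with the sublevel sets of $V$ keeping pairwise distances bounded away from the singularity), and that part is fine. The genuine gap is in Parts 2 and 3: you propose to go from the configuration reached in $\Omega_\eps$ to a ring of radius $\geq \bar R$ by a single quasi-static deformation along ``expanding concentric ring configurations'', with the control budget along the path bounded by $M_{\alpha,\beta}+\tilde M_F$ or $\tilde M_N$. This does not work as stated. The configuration delivered by Step 1 is \emph{not} a ring; it is an arbitrary near-equilibrium of the singular potential (e.g.\ a tight cluster for power-law potentials), whose only guaranteed property is that the \emph{net} forces are $\leq\eps$ while individual pairwise distances may be as small as some $\delta_0$ depending on the initial energy, hence arbitrarily small. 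A quasi-static deformation requires every intermediate configuration to be sustainable with admissible control, i.e.\ $\|F_i\|$ along the whole path must stay within the budget; but between a cramped non-ring configuration and a ring, the intermediate net forces are controlled neither by $\tilde M_F$ (which bounds only $\sup U'$, the attractive part, while the repulsive part blows up as $U'(r)\to-\infty$) nor by $\tilde M_N$ (which applies only once \emph{all} pairwise distances exceed $2\sin(\pi/N)\bar R$, which nothing guarantees along your path). Your proposed fix --- ``use the initial lower bound $\delta_0$'' --- does not close this, since $\sup_{r\geq\delta_0}|U'(r)|$ can exceed any prescribed $M$.

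The missing idea, and the reason the paper's proof of Parts 2--3 looks quite different, is the preliminary \emph{blow-up} via a fictitious potential: one spends a budget of at most $\tilde M_F+2\eta$ to replace, through the control $u_i=F_i-\tilde F_i+w_i$, the true potential by a purely repulsive one $\tilde U$, and then runs Jurdjevic--Quinn on the modified system. Because the modified forces are purely repulsive, membership in $\tilde\Omega_\eps$ forces (via the convex-hull-vertex/outer-bisector estimate) at least one agent to be far from all the others; iterating agent by agent, all pairwise distances are driven beyond a large threshold $\mathcal{L}$, after which the agents are arranged on a circle of \emph{large} radius by quasi-static moves with explicit geometric bookkeeping keeping distances above $\mathcal{L}$. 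Only then does one give the velocity impulse, converge along the heteroclinics, and finally \emph{shrink} the radius down to the desired $r\geq\bar R$, which is the phase where the bounds $\tilde M_N$ (flock ring) and $\tilde M_N+\frac{\alpha}{\beta\bar R}$ (mill ring) are actually used. Your budget split correctly anticipates which constants govern which forces, but without the blow-up mechanism there is no admissible path connecting the output of Step 1 to the ring regime, so the central step of Parts 2--3 is unproved in your proposal.
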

	
	The proof of the first statement is nearly identical to Steps 1, 2 and 3.1 of the proof of Theorem \ref{mainthm}. Few more estimates are needed to ensure convergence of velocities and forces to $0$ (see details in Section \ref{s-21}).
	
	To prove the two last statements, the differences are more significant. Hereafter, we give a brief sketch of the strategy. Full details are given in Section \ref{s-22}.
	In contrast to Theorem \ref{mainthm}, and since the potential is now infinite at $0$, to prove Theorem \ref{mainthm_variant} the main idea is now to a priori blow-up the group of agents, i.e., by steering them far from each other. Afterwards, we place all of them along an adequate configuration to let them converge to a flock or a mill. The first step is done by creating a fictitious potential, killing the attractive part of the initial potential $U$ and adding a small repulsive part.
	
	\medskip
	
	\paragraph{\bf Step 1: Blow-up} 
	In contrast to Theorem \ref{mainthm}, the set $\Omega_0$ may now be empty (for instance, take the potential $U(r)=1/r$), but in the strategy that we develop below, this is actually an advantage, because in this case the Jurdjevic-Quinn strategy steers at least one of the $N$ particles sufficiently far from all others.

	\medskip
	
	\subparagraph{\bf Step 1.1: Fictitious purely radial potential} 
	We apply the control
	\begin{equation}\label{control_fictitious}
		u_i=\frac{1}{N}\sum_{\substack{j=1\\ j\neq i}}^N \left( \grad W (x_i-x_j)-\grad\tilde W (x_i-x_j) \right) +w_i = F_i(x) - \tilde F_i(x) + w_i
	\end{equation}
	which amounts to replacing in \eqref{syst} the radial potential $U$ with a new potential $\tilde U$, that we choose purely repulsive. This means that we replace the potential $W(x)=U(|x|)$ with $\tilde W(x)=\tilde U(|x|)$. Hence, by choosing this control, we cancel the attractive part of the potential $U$ and we add a small repulsive part. The constraint $\|u\|\leq M$ is satisfied if $\|w\|\leq \tilde M$ with 
	\begin{equation}\label{tildeM}
		\tilde M=M- \sup_{r>0} |\tilde U'(r)-U'(r)|.
	\end{equation}
	In such a way, the control system \eqref{syst} becomes
	\begin{equation}\label{syst2}
		\begin{split}
			\dot x_i(t) &= v_i(t) \\
			\dot v_i(t) &= (\alpha-\beta \vert v_i(t)\vert^2 ) v_i(t) - \tilde F_i(x(t)) + w_i(t) 
		\end{split}
	\end{equation}
	where { $\tilde F$ is the force associated to the potential $\tilde W$} and $w$ is the new control, subject to the constraint $\|w(t)\|\leq \tilde M$ for almost every $t$.

	\medskip
	
	\subparagraph{\bf Step 1.2: Blowing-up all agents}
	We apply Step 1 of Theorem \ref{mainthm} (Jurdjevic-Quinn stabilization procedure) to the modified control system \eqref{syst2}, in order to steer it to the set
	$$
	\tilde \Omega_\eps = \{ (x,v)\in\R^{4N}\ \mid\ v=0,\ \|\tilde F(x)\|<\eps \} .
	$$
	By choosing $\eps>0$ sufficiently small, we thus obtain that one of the agents is moved arbitrarily far from all others, since all forces are repulsive. Moreover, by a simple geometric observation, one can prove that this agent is also far from the convex hull of all other agents, i.e., it can be genuinely moved away from all other $N-1$ agents.
	
	Repeating then the same strategy to all agents, one by one, we ultimately obtain that all particles can be moved far away one from each other.

	\medskip

	\paragraph{\bf Step 2: Circular equidistributed configuration} Since all agents are arbitrarily far, we have $F(x)\simeq 0$ by the assumption $\lim_{r\to+\infty} U'(r)=0$. We can then steer all agents to a circular equidistributed configuration of large radius. The only technical detail is that we need to ensure that all particles keep being sufficiently far one from each other, so that $F(x)\simeq 0$. Details are given in Section \ref{s-22}.

	\medskip
	
	\paragraph{\bf Step 3: Converging to the desired target} We finally steer the configuration to the desired one as follows: we first give the desired initial impulse to the velocity variable to have either a flock ring or a mill ring with a large radius (as in the proof of Theorem \ref{mainthm}, Step 3.1 for flock rings and Step 3.3 for mill rings).
	We then reduce the radius to the desired $r\geq \bar R$ by applying a suitable central force. If the target is a flock ring, we can counteract the interaction forces thanks to the assumption $M>\sup\{ |U'(r)|\ \mid\ r>2\sin(\tfrac{\pi}N) \bar R\}$. If the target is a mill ring, we also need to counteract the centripetal force, which is possible thanks to the assumption $M>\sup\{ |U'(r)|+\frac{\alpha}{\beta \bar R} \ \mid\  r>2\sin(\tfrac{\pi}N) \bar R \}$.

	
	\section{Local stability and feedback control design}\label{sec_num}
	
	In this section, we recall some known stability properties of flocks and mills established in \cite{ABCV,bertozzi,CHM}. These properties are useful for control design since under conditions of local stability of the corresponding flock or mill solutions, our controls can be be switched off once in a neighborhood of the profiles and the free dynamics of the system will self-regulate towards the desired limiting state. We show how to use the strategies depicted in the previous section to build feedback controls that stabilize to the flock or mill solutions.
	
	\subsection{Local stability of flock manifolds} {We now consider the system \eqref{syst} with no control and we study its stability properties.}  First remark that \eqref{syst} with $u\equiv 0$ is obviously invariant under translations and rotations in space, and thus, if we find a flock solution, this particular solution gives rise to infinitely many flock solutions via these invariances. Moreover, once we have a flock solution the direction of the translational velocity of the flock can be freely chosen. It is then natural to deal with flock solutions seen as a manifold of configurations, that we describe in the following 
	
	\begin{definition}\label{d-flockmill} Let $(x^*,v^*)$ be an initial configuration such that the corresponding solution of \eqref{syst} is a flock. The \emph{flock manifold} associated to $x^*$ is
		$$\mathcal{F}(x^*)=\left\{ 
		\begin{pmatrix} x\\ v \end{pmatrix}\in\R^{4N}
		\ \mid\ x\in RT(x^*),\, v=\mathbf{1}_N\otimes \bar v,\, \bar v\in \R^2,\, |\bar v|=\sqrt{\frac{\alpha}{\beta}}
		\right\}
		$$
		where 
		$
		RT(x^*)=\left\{ \mathbf{1}_N\otimes b+\Pt{\mathrm{Id}_N\otimes  \mathcal{R}_\theta} x^* 
		\ \mid\ \theta\in[0,2\pi),\ b\in \R^2\right\}
		$ is the family of states obtained by rotations and translations from $x^*$.
		
		{Here, the symbol $M\otimes K$ denotes the Kronecker product: given a matrix of dimension $n\times m$ and a matrix of dimension $p\times r$, the matrix $M\otimes K$ has dimension $np\times mr$ and is built by replacing each element $m_{ij}$ in $M$ with the matrix $m_{ij} K$.}
	\end{definition}
	
	The invariance of trajectories plays a crucial role in the study of solutions of \eqref{syst}. In particular, the concept of stability needs to be adapted to the fact that solutions do not converge to a precise point: instead, they converge to the manifold in the sense of the distance, while they go to infinity in the $x$-variable. For this reason, we adapt the classical definition of asymptotic convergence to this setting, by considering neighborhoods invariantly defined based on the metric. We denote by
	$
		d((x,v),A)=\underset{a\in A}{\inf}\|(x,v)-a\|
	$ 
the distance to the set $A\subset \R^{4N}$. 
	
	\begin{definition}\label{d-invnei}
		A manifold $A$ is locally asymptotically stable if there exists a $\eps_0$-neighborhood $V_0$ of $A$ such that for each $\eps$-neighborhood $V\subset V_0$ of $A$, there exists a $\delta$-neighborhood $U$ of $A$ such that $(x(0),v(0))\in U$ implies both $(x(t),v(t))=(x_1(t),\ldots,x_N(t),v_1(t),\ldots,v_N(t))\in V$ and $\underset{t\to \infty}{\lim}d((x(t),v(t)),A)=0.$
	\end{definition}

The asymptotic stability of the flock manifold was analysed in \cite{CHM}. There, it was shown that it is intimately related to the stability of the linearized system of the associated first-order system \eqref{eq:firstorder}: $\dot x_i=F_i(x)$ with $F_i$ given by \eqref{syst2b}, around a stationary configuration $\hat x$ given by $\dot h = G(\hat x) h$ with
\begin{equation*}
	G_{ij}(\hat x)=\begin{cases}
		-\sum_{k\neq i} \mathrm{Hess}\, W(\hat x_i- \hat x_k)&\mbox{~~ for ~} i=j,\\
		\mathrm{Hess}\, W(\hat x_i- \hat x_j) &\mbox{~~ for ~} i\neq j.
	\end{cases}
\end{equation*}
Under certain assumptions on the eigenvalues and eigenspaces of $G(\hat x)$, we have a linear system $\dot h = G(\hat x) h$ with a zero eigenvalue of multiplicity $4$, and all other eigenvalues have a negative real part. We refer to \cite[Theorem 1]{CHM} for the exact assumptions needed for local asymptotic stability around the flocking manifold.

\begin{proposition}  Let $\hat x$ be a stationary state for \eqref{eq:firstorder}. Then, the manifold $\mathcal{F}(\hat x)$ is locally asymptotically stable for all configurations $x^*$ under certain assumptions on the the eigenvalues and eigenspaces of $G(\hat x)$, in the following sense: any small enough perturbation in the variables $(x,v)$ of the flock solution $z^*\in \mathcal{F}(\hat x)$ associated to $x^*$ under the dynamics \eqref{syst}  exponentially converges to $\mathcal{F}(\hat x)$.
\end{proposition}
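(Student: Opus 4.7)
The plan is to realize $\mathcal{F}(\hat x)$ as a normally hyperbolic invariant manifold for the autonomous dynamics \eqref{syst} with $u\equiv 0$, and to invoke a classical manifold stability theorem to deduce exponential convergence of nearby trajectories.

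First I would pick a representative $z^* = (\hat x + t\bar v, \mathbf{1}_N\otimes\bar v) \in \mathcal{F}(\hat x)$ and pass to co-moving coordinates $\xi = x - t\,\mathbf{1}_N\otimes\bar v$, $\eta = v$. The system becomes autonomous and the three-parameter family $RT(\hat x)$ paired with the fixed velocity $\mathbf{1}_N\otimes\bar v$ forms a three-dimensional submanifold of equilibria. The fourth dimension of $\mathcal{F}(\hat x)$, corresponding to moving $\bar v$ along the circle $|\bar v|=\sqrt{\alpha/\beta}$, is \emph{not} a family of equilibria in a single fixed co-moving frame: it manifests in the linearization as a Jordan-type direction, since shifting the reference velocity produces a linear drift in $\xi$.

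Next I would compute the linearization at $(\hat x, \mathbf{1}_N\otimes \bar v)$. Using
\[
D_v\bigl[(\alpha-\beta|v|^2)v\bigr]\big|_{v=\bar v} = (\alpha-\beta|\bar v|^2)I - 2\beta\,\bar v\bar v^\top = -2\beta\,\bar v\bar v^\top
\]
at $|\bar v|^2 = \alpha/\beta$, the linearized second-order system reads
\[
\frac{d}{dt}\begin{pmatrix}\xi\\ \eta\end{pmatrix}
= \begin{pmatrix} 0 & I_{2N} \\ -G(\hat x) & -2\beta\,(I_N\otimes P_{\bar v}) \end{pmatrix}\begin{pmatrix}\xi\\ \eta\end{pmatrix},
\qquad P_{\bar v}=\frac{\bar v\bar v^\top}{|\bar v|^2},
\]
so its eigenvalues are the roots of $\det\bigl(\lambda^2 I_{2N} + 2\beta\lambda\,(I_N\otimes P_{\bar v}) + G(\hat x)\bigr) = 0$. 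Combining the spectral hypothesis on $G(\hat x)$ with this block structure, I would verify that $\lambda=0$ has a generalised eigenspace that coincides exactly with $T_{z^*}\mathcal{F}(\hat x)$: three invariant center directions lift the kernel of $G(\hat x)$ associated with translations and rotations (vectors $(\xi_0,0)$ with $G(\hat x)\xi_0=0$), and a fourth Jordan direction is spanned by $\eta = \mathbf{1}_N\otimes\bar v^\perp$, which encodes the absence of damping transverse to $\bar v$ together with the circular constraint on the flocking velocity. For every remaining eigenvalue, I would reduce mode by mode along the nonzero eigenspaces of $G(\hat x)$ and use a Routh--Hurwitz argument to show that the rank-one partial damping $2\beta P_{\bar v}$ suffices to push the corresponding $\lambda$ into the open left half-plane.

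Finally, with $\mathcal{F}(\hat x)$ shown to be normally hyperbolic with stable normal bundle at every point, I would apply the Hirsch--Pugh--Shub invariant manifold theorem (or, equivalently, a center-stable manifold argument) to obtain a tubular neighbourhood of $\mathcal{F}(\hat x)$ on which every trajectory is exponentially attracted to $\mathcal{F}(\hat x)$; transporting back to the original coordinates gives the claim. I expect the main obstacle to be the spectral step: both $G(\hat x)$ and the rank-one partial damping $P_{\bar v}$ are highly degenerate, and one has to check that their degeneracies interact so as to produce \emph{exactly} the four-dimensional center subspace $T_{z^*}\mathcal{F}(\hat x)$ with no spurious purely imaginary modes. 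This is precisely what the abstract ``assumptions on eigenvalues and eigenspaces of $G(\hat x)$'' imported from \cite{CHM} are designed to guarantee.
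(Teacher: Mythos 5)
You should first note that the paper does not actually prove this proposition: the statement is recalled from the nonlinear stability analysis of \cite{CHM}, and the paragraph preceding it (relating stability of the flock manifold to the spectrum of $G(\hat x)$, with a zero eigenvalue of algebraic multiplicity $4$ and all other eigenvalues with negative real part) together with the citation of \cite[Theorem 1]{CHM} is the entire justification given. So your attempt at a self-contained argument goes beyond what the paper does, and its overall architecture (linearize at the relative equilibrium in a co-moving frame, identify a four-dimensional center space consisting of the two translations and the rotation in the kernel of $G(\hat x)$ plus one Jordan direction generated by $\eta=\mathbf{1}_N\otimes\bar v^\perp$, show the transverse spectrum is strictly stable, conclude by an invariant-manifold argument) is indeed the expected shape of such a proof and matches the multiplicity-$4$ picture described in the paper.

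However, as written your plan has genuine gaps at exactly the two places where the real work lies. First, the transverse spectral step: you propose to ``reduce mode by mode along the nonzero eigenspaces of $G(\hat x)$'' and apply Routh--Hurwitz, but the damping block is only rank one per agent (it equals $-2\alpha P_{\bar v}$, by the way, not $-2\beta P_{\bar v}$, and the position block carries the factor $1/N$ and a sign fixed by the conventions in \eqref{syst} versus \eqref{eq:firstorder}), and the eigenspaces of $G(\hat x)$ have no reason to be compatible with the splitting of each velocity plane into the $\bar v$ and $\bar v^\perp$ directions; hence the problem does not decouple into scalar second-order modes, and a componentwise Routh--Hurwitz computation is not available. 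Deferring this to ``precisely what the assumptions on $G(\hat x)$ are designed to guarantee'' is circular: those hypotheses concern the first-order operator $G(\hat x)$ only, and deducing from them that the $4N$-dimensional second-order linearization with partial damping has no spectrum on the imaginary axis outside the four center directions is the actual content of \cite[Theorem 1]{CHM}, which you would have to reproduce or invoke explicitly. Second, the invariant-manifold step: $\mathcal{F}(\hat x)$ is non-compact (it contains all translations) and the center dynamics carry the secular drift coming from the Jordan block, so Hirsch--Pugh--Shub does not apply off the shelf; one must either quotient by the translation (and rotation) symmetry, exploiting that the vector field depends only on relative positions, or establish uniform normal hyperbolicity estimates along the group orbit before concluding exponential attraction to the manifold. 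With those two points repaired, your route would constitute a legitimate proof of the proposition independent of the citation.
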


This attractivity property of $\mathcal{F}(\hat x)$ is natural, in view of the fact that local perturbations in $(x,v)$ might introduce rotations and translations, that is the flock solutions are stable under small perturbations leading to another flock with a small deviation in their direction. In the following, we illustrate the use of these stability properties in conjunction with Theorem \ref{mainthm} to provide an effective feedback design for stabilization towards flock solutions.

\begin{figure}[!ht]
	\centering
	\includegraphics[width=0.33\textwidth]{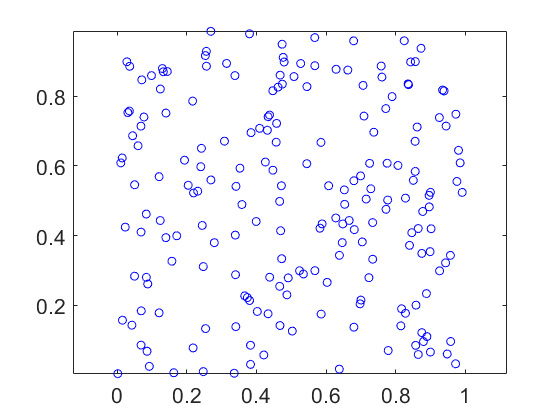}
	\includegraphics[width=0.33\textwidth]{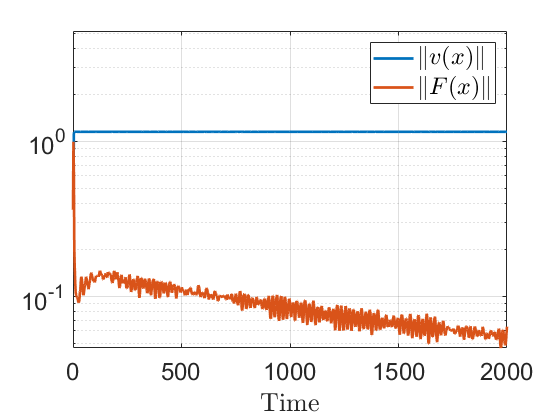}\hfill
	\includegraphics[width=0.33\textwidth]{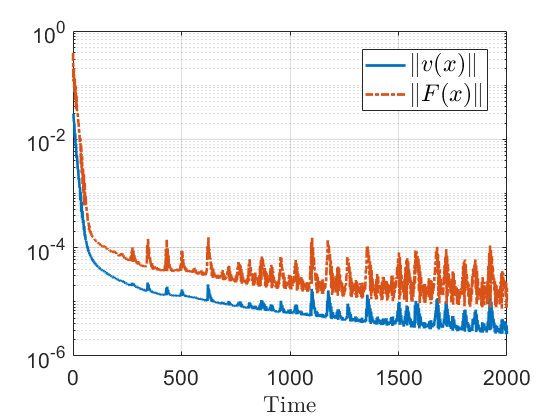}
	\\
	\includegraphics[width=0.33\textwidth]{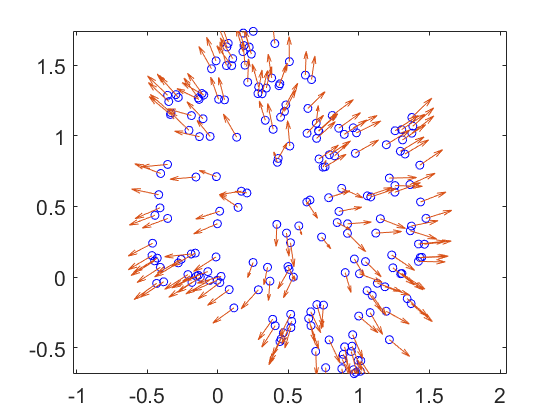}\hfill
	\includegraphics[width=0.33\textwidth]{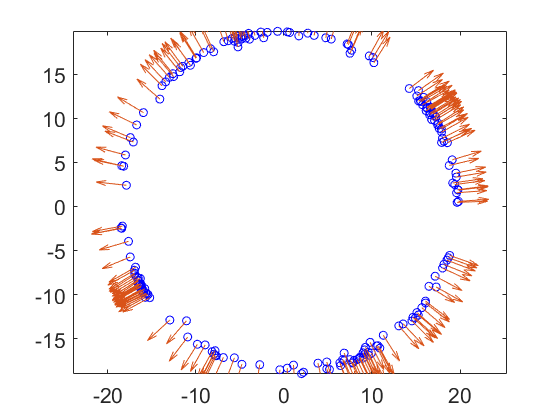}\hfill
	\includegraphics[width=0.33\textwidth]{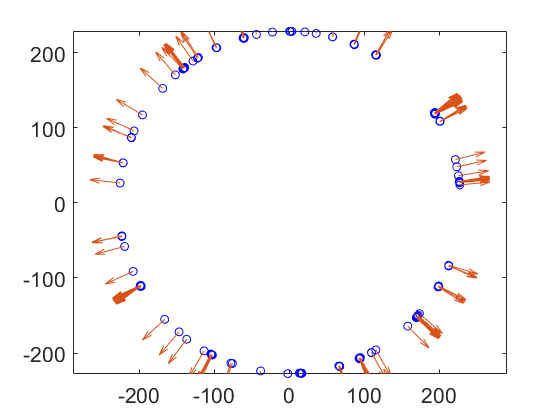}\\
	\includegraphics[width=0.33\textwidth]{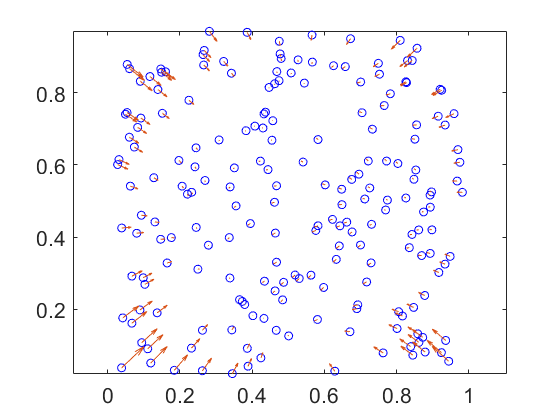}\hfill
	\includegraphics[width=0.33\textwidth]{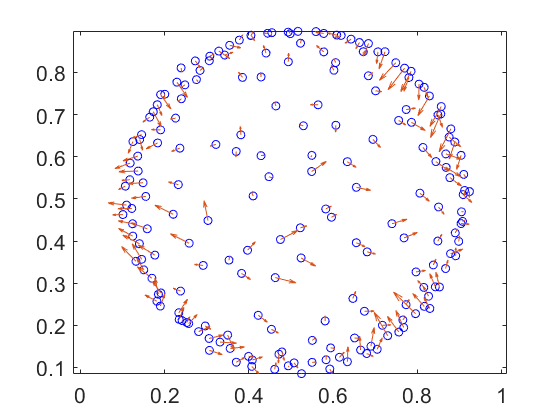}\hfill
	\includegraphics[width=0.33\textwidth]{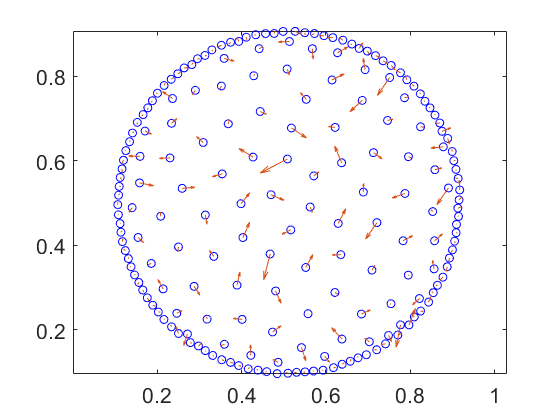}
	\caption{Jurdjevic-Quinn stabilization for the quasi-Morse potential \eqref{qmpot} with $C=0.6, p=1.5$, $l=0.5,\alpha=2,\beta=1.5, N=200$. From left to right. Top: initial state, uncontrolled and controlled energy evolution. Middle: uncontrolled evolution at $t=4,20,200$. Bottom: controlled evolution at $t=4,20,200$. The Jurdjevic-Quinn feedback law stabilizes towards a neighborhood of $\Omega_{\epsilon}$.}\label{step11}
\end{figure}

\subsection{Flock control with a quasi-Morse potential}
Given the potential 
\begin{align}\label{qmpot}
W(x)=U(|x|)=V(|x|)-CV(|x|/l)\,,\qquad V(r)=-e^{-r^p/p}\,,
\end{align}
it is known from \cite{CHM2} that repulsive-attractive forces lead to a locally stable flock manifold with a particular spatial configuration. Figure \ref{step11} provides an illustration of free and controlled dynamics for this system. Given a random configuration of particles at rest (top left), the free dynamics evolve towards a ring formation which grows in time, weakening the influence of the potential (middle row). By implementing the Jurdjevic-Quinn feedback control from Theorem \ref{mainthm}, Step 1.1, the evolution is controlled towards a bounded configuration at rest (bottom left): it corresponds to the locally stable uncontrolled profile found in \cite{CHM2}. The uncontrolled dynamics then converge towards a state where $\|F(x)\|=0$ due to expansion, while $\|v(x)\|$ remains constant because of self-propulsion (top, middle). Instead, the Jurdjevic-Quinn stabilization ensures decay on both quantities (top, right), generating a configuration that can be subsequently stabilized towards different flocks (bottom middle, right).

In Figure \ref{quasi}, we illustrate the quasi-static deformation between different flocks pointing towards different directions using the strategy of Step 3.2 of the previous section. Given a time horizon $T$, initial and terminal velocities $v_0$ and $v_T$, characterized by angles $\theta_0$ and $\theta_T$ respectively, and magnitude $\sqrt{\frac{\alpha}{\beta}}$, we transition from $v_0$ to $v_T$ through the action of the linear, time-dependent, feedback control
\begin{align}\label{quasiflock}
	u_i(v_i,t)=-M(v_i-\mathcal{R}_{\theta(t)}v_0)\,,\quad \theta(t)=\theta_0+\frac{t}{T}(\theta_T-\theta_0)\,,
\end{align}
where $\mathcal{R}_{\theta}$ is the rotation matrix.

	\begin{figure}[!ht]
		\centering
		\includegraphics[width=0.33\textwidth]{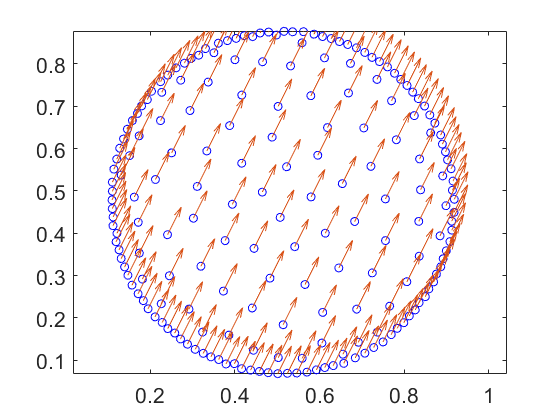}\hfill
		\includegraphics[width=0.33\textwidth]{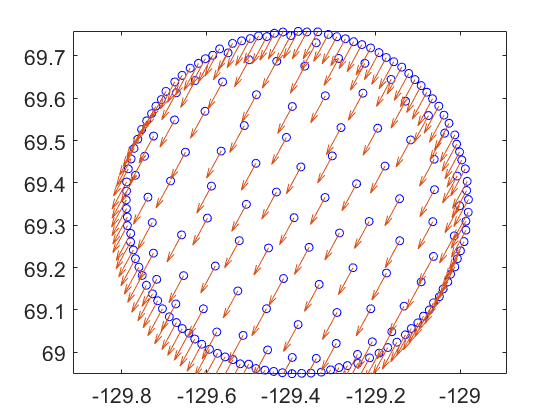}
		\includegraphics[width=0.33\textwidth]{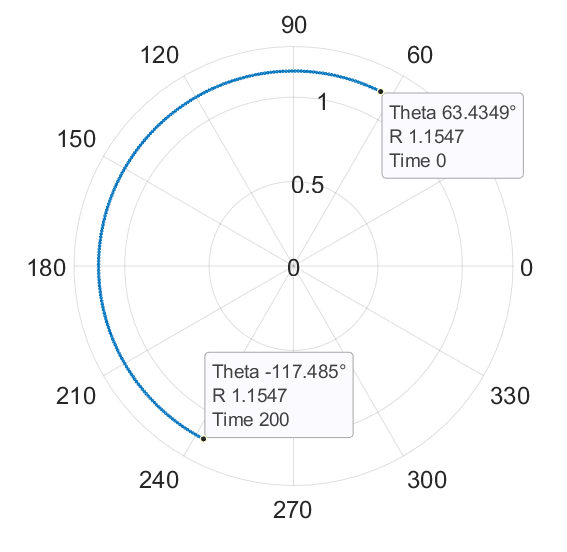}
		\caption{Quasi-static deformation for flock transition with a quasi-Morse potential  \eqref{qmpot} with $C=0.6$, $p=1.5, l=0.5,\alpha=2,\beta=1.5, N=200$. Left: initial flock. Middle: terminal flock. Right: average angle evolution. The use of the quasi-static feedback law \eqref{quasiflock} connects different flocks.}\label{quasi}

	\end{figure}
	
	\subsection{Local stability of mill rings} \label{s-millstab} We now turn our attention to the study of the local behavior of mill rings. We introduce adapted coordinates, and study the asymptotic stability for solutions exhibiting rotational symmetries only. It is then useful to introduce the time-varying orthonormal frame in $\R^{2N}$
$$
	e_i^r=\left(\cos\left(\frac{2\pi i}N+\omega t\right),\sin\left(\frac{2\pi i}N+\omega t\right)\right),\qquad e_i^\perp=(e_i^r)^\perp,
$$
with $e^\perp$ denoting the rotated vector by $\pi/2$ of $e\in\R^2$, and $\omega$ the angular velocity of the mill. Without loss of generality, we consider a mill ring of $N$ agents rotating around the point $(0,0)$. By rearranging indices, we assume that positions and velocities satisfy
$$
	x_i(t)=R e^r_i,\qquad v_i(t)=\sqrt\frac{\alpha}{\beta} e^\perp_i.
$$
for $i=1,\ldots, N$. It is clear that $\dot e^r_i=\omega e^\perp_i$ and $\dot e^\perp_i=-\omega e^r_i$. The mill radius $R$ is given by the solution of \eqref{e-radius}. We now consider a perturbation around such a mill solution, that we write as
	$$
	x_i(t)=(R+r_i(t))\mathcal{R}_{\theta_i(t)}e^r_i,\qquad v_i(t)=\left(\sqrt\frac{\alpha}{\beta}+w_i(t)\right) \mathcal{R}_{\tau_i(t)}e^\perp_i,
	$$
{	by writing:
	\begin{itemize}
	\item $x_i$ in polar coordinates, where $r_i,\theta_i$ are perturbations of the radius and angle variables with respect to $R, \mathrm{arg}(e^r_i)$;
	\item $v_i$ in polar coordinates, where $w_i,\tau_i$ are perturbations of the radius and angle variables with respect to $\sqrt\frac{\alpha}{\beta}, \mathrm{arg}(e^\perp_i)$.
	\end{itemize}
	Here, $\mathrm{arg}(v)$ is the argument of the nonzero vector $v$, i.e., the angle in its polar coordinates.}
	
	A straightforward computation shows that the system \eqref{syst} can be written in term of the new variables $r_i,\theta_i,w_i,\tau_i$. 
A general result of stability around such solutions seems out of reach (see some results for a first-order system with a similar structure in \cite{bertozzi}). We instead briefly investigate the local stability of solutions with rotational symmetries, i.e., solutions that satisfy
	$
	r_i=r_j, \theta_i=\theta_j, w_i=w_j,\tau_i=\tau_j$ with $i,j=1,\ldots, N.
	$
	Notice that due to the rotational symmetry, the last equation becomes $\dot \tau_i=-\omega$.  By dropping the index for variables $r_i,w_i$ and introducing the variable $\gamma=\theta_i-\tau_i$, the system is reduced to
	\begin{equation}\label{e-mill-lin}
		\begin{split}
			\dot r&=\left(\sqrt\frac{\alpha}{\beta}+w \right)\sin(\gamma)\,,\qquad
			\dot \gamma=\left(\frac{\sqrt\frac{\alpha}{\beta}+w}{R+r}-\frac{\phi(r)}{\sqrt\frac{\alpha}{\beta}+w}\right)\cos(\gamma)\,,\\
			\dot w&=-\left(2\sqrt{\alpha\beta}w+\beta w^2\right)\left(\sqrt\frac{\alpha}{\beta}+w \right)-\phi(r)\sin(\gamma)\,,
		\end{split}
	\end{equation}
	where 
	\begin{eqnarray*}
		\phi(r)&=&(1,0)\cdot \frac1N\sum_{j=1}^{N-1}\nabla W\left((R+r)((1,0)-\left(\cos\left(\frac{2\pi j}N\right),\sin\left(\frac{2\pi j}N\right)\right)\right)\\
		&=&\frac1N\sum_{j=1}^{N-1}\sin\left(\frac{\pi j}N\right)U'\left((R+r)\sin\left(\frac{\pi j}N\right)\right)\,.
	\end{eqnarray*}
	Note that, since \eqref{e-radius} is satisfied, we have
	$
	\frac{\sqrt\frac{\alpha}{\beta}}{R}=\frac{\phi(0)}{\sqrt\frac{\alpha}{\beta}}=\omega.
	$
	In particular, the trajectory $(r(t),\gamma(t),w(t))=(0,0,0)$ is a solution of \eqref{e-mill-lin}. This shows invariance of the mill solution under a same translation in all variables $\alpha_i,\beta_i$. One can easily study linear stability properties for \eqref{e-mill-lin}: the linearized system is given by
	\begin{equation*} 
		\begin{pmatrix}
			\dot r\\\dot \gamma\\\dot w
		\end{pmatrix}=A \begin{pmatrix}
			r\\ \gamma\\ w
		\end{pmatrix} .
\mbox{~~~with~~~}A=
		\begin{pmatrix}
			0 & \sqrt\frac{\alpha}{\beta} & 0\\
			-\frac{\omega}R -\frac{\phi'(0)}{\sqrt\frac{\beta}{\alpha}} & 0& \frac{2}{R}\\
			0 & -\phi(0) & -2\alpha
		\end{pmatrix}
			\end{equation*}
{One can compute the characteristic polynomial $\lambda^3+a_2\lambda^2+a_1\lambda^1+a_0$ of $A$, by identifying coefficients with principal minors of $A$. We have
$$
a_2=-\mathrm{Tr}(A)=2\alpha,\qquad 
a_1=\left(\frac{\omega}R +\frac{\phi'(0)}{\sqrt\frac{\beta}{\alpha}}\right)\sqrt\frac{\alpha}{\beta}+\phi(0)\frac2R ,
$$
$$
a_0=-\mathrm{det}(A)=\left(\frac{\omega}R +\frac{\phi'(0)}{\sqrt\frac{\beta}{\alpha}}\right)\sqrt\frac{\alpha}{\beta}(2\alpha) .
$$
Recall a special case of the Routh–Hurwitz criterion for third-order monic polynomials (see \cite{trelatbook2005}): the linear stability property is ensured if and only if $a_2,a_0,a_2a_1-a_0>0$. Then, recalling that $\alpha,\beta,\phi(0)>0$, the linear stability property is ensured by the condition $\frac{\omega}R +\sqrt\frac{\alpha}{\beta}\phi'(0)>0.$}
	This condition is always satisfied for power-law potentials of the form
	\begin{align}\label{plaw}
	U(s)=\frac{|s|^a}a - \frac{|s|^b}{b},\qquad a>b>0 ,
	\end{align}
	 since we have in this case
	$$
	\phi'(0)=\sqrt\frac{\alpha}{\beta}\omega(a-b)R^b\sum_{j=1}^{N-1}\sin\left(\frac{\pi j}N\right).
	$$
	This linear stability analysis shows that the equilibrium solution $(r(t),\gamma(t),w(t))=(0,0,0)$, i.e., the mill ring solution is a locally asymptotically stable equilibrium point to \eqref{e-mill-lin}. As a consequence, the mill ring solution is locally asymptotically stable for perturbations keeping the rotational symmetry of \eqref{syst}. In the following, we explore the interplay between stability of mill rings and control design for power law potentials.
	\begin{figure}[!ht]
		\centering
		\includegraphics[width=0.49\textwidth]{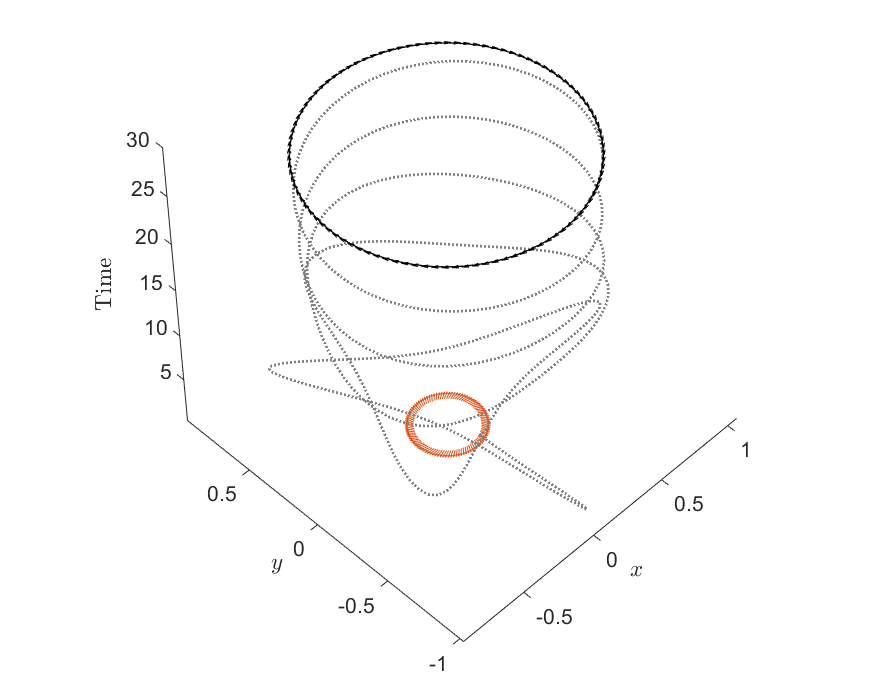}\hfill
		\includegraphics[width=0.49\textwidth]{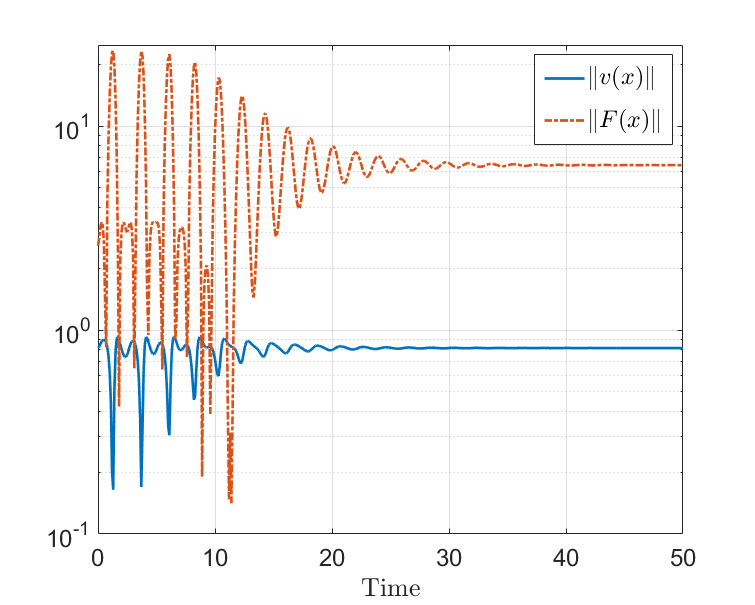}\\
		\includegraphics[width=0.49\textwidth]{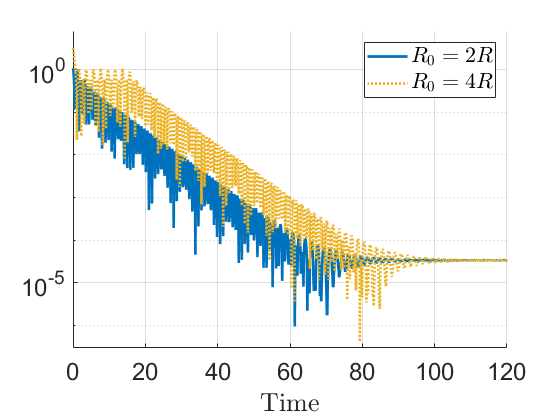}\hfill
		\includegraphics[width=0.49\textwidth]{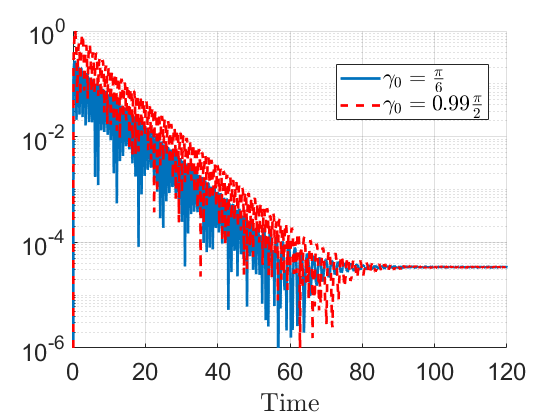}
		\caption{Mill ring stability in the power-law potential \eqref{plaw} with $a=4,b=1,\alpha=10, \beta=3$, $N=200$. Top left: an initial ring configuration converges to mill ring solution (sample agent trajectory in grey). Top right: energy evolution of the swarm towards the mill. Bottom left: evolution of the distance to the stable mill radius $R$ for mill configurations departing from an initial radius $R_0$. Bottom right: evolution of the distance to stable mill radius for rings of radius $R$, initial velocity rotated from the tangential velocity by $\gamma_0$. The evolution to the stable mill configuration is robust to perturbations.}
		\label{mill1}
	\end{figure}
	
\subsection{Controlling mills for a power-law potential}
Although the local stability analysis is enough for our purposes of controlling the system \eqref{syst} towards mill ring solutions, we observe that for power-law potentials \eqref{plaw}, the mill ring is globally asymptotically stable for solutions with rotational symmetry.  Figure \ref{mill1} (top row) shows the evolution of a particular solution, not a small perturbation, with rotational symmetry, converging towards a stable mill with radius given by \eqref{e-radius}, similar to a nonlinear damped oscillator. The second row further illustrates the stability of the mill ring by considering the evolution of two types of perturbations. On the left, the initial configuration is a mill ring with radius different from the stable solution. On the right, the initial configuration is a ring of stable radius, however the tangential velocities are shifted by an angle $\gamma_0$. In both plots, the vertical axis represents the distance with respect to the stable radius, and we can observe that for both types of perturbations the uncontrolled dynamics stabilize towards the mill.   
		
Feedback controls can be used to induce or accelerate convergence to mill ring solutions. For example, given a stable flock ring configuration, the system can be stabilized towards a mill through the action of the feedback law
\begin{equation*}
	u_i(v_i(t))=-M\left(v_i-\sqrt{\frac{\alpha}{\beta}}\frac{x_i^\perp}{|x_i^\perp|}\right)\,.
\end{equation*}
A design alternative is to resort to instantaneous controls \cite{APZ14}, which can be interpreted as feedback laws in the same spirit of model predictive control strategies. We synthesize a feedback control by solving
\begin{equation}\label{instfeed}
	\underset{u\in[-1,1]^2}{\min} \sum\limits_{i=1}^N \left|v_i-\sqrt{\frac{\alpha}{\beta}}\frac{x_i^\perp}{|x_i^{\perp}|^2}\right|+(|x_i-x_m|^2-R_m^2)^2+\lambda_1|u|+\lambda_2|u|^2\,,\quad \lambda_1,\lambda_2>0\,,
\end{equation} 
where $x_m$ corresponds to the center of mass of the swarm, $R_m$ is the desired mill radius, and $(x_i,v_i)$ is the future state of the system after a small control horizon $\Delta t$. We consider $\ell_1$ and $\ell_2$-norm control penalties to induce sparsification in time. For the sake of real-time computability, this optimization is reduced to a single control signal $u\in\R^2$, which enters the dynamics through an incremental rotation of $\frac{2\pi}{N}$. This control differs from the signal that would be obtained optimizing each $u_i$ separately, however it still succeeds in stabilizing around the mill ring solution, as shown in Figure \ref{instcont}, presumably due to the large basin of attraction surrounding the mill ring solution. 
\begin{figure}[!ht]
		\centering
		\includegraphics[width=0.33\textwidth]{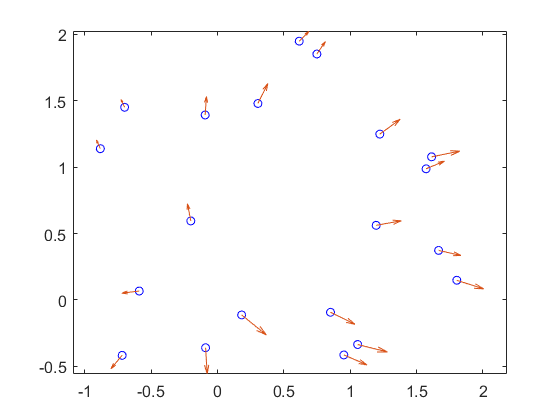}\hfill
		\includegraphics[width=0.33\textwidth]{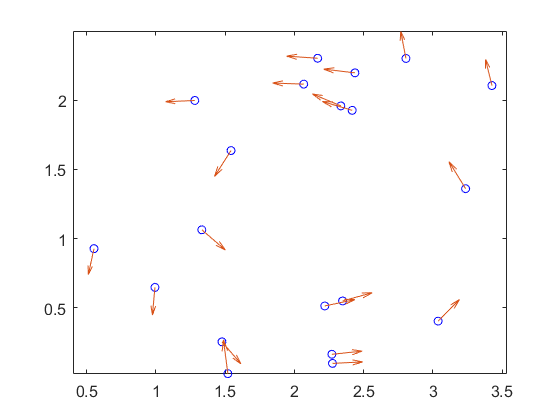}\hfill
		\includegraphics[width=0.33\textwidth]{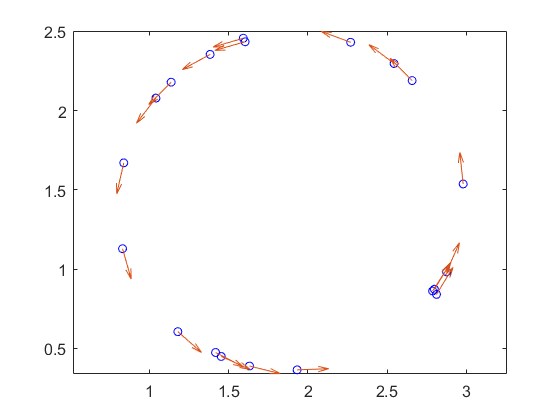}\\
		\includegraphics[width=0.49\textwidth]{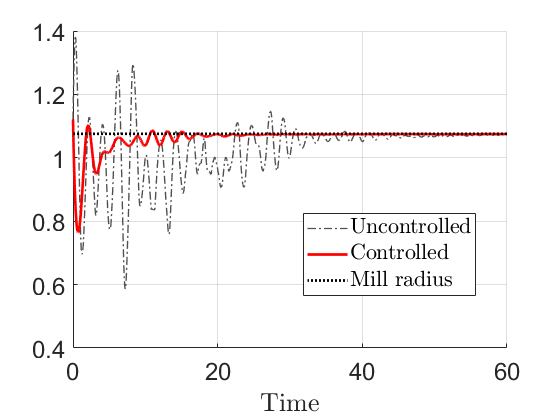}\hfill
		\includegraphics[width=0.49\textwidth]{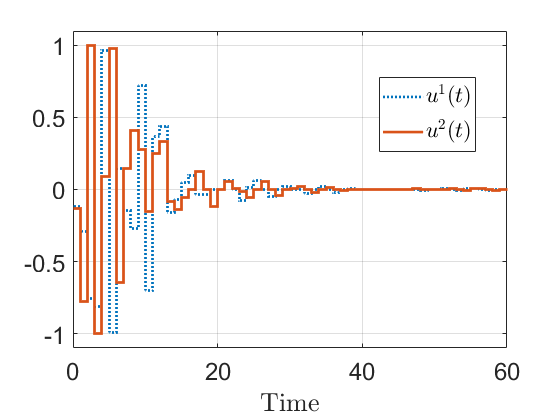}\\
		\caption{Stabilization towards a mill ring for the power-law potential \eqref{plaw} with $a=4,b=1$, $\alpha=10, \beta=3$, $N=20$, using the feedback \eqref{instfeed}. Top (left to right): swarm at $t=0,10,40$. Bottom left: evolution of the configuration radius towards the stable mill.  Bottom right: control signal.}\label{instcont}
\end{figure}

We apply a similar idea to control a stable mill towards a flocking configuration of different radius. In this case we compute one control variable per agent by solving 	
\begin{equation}\label{instfeed2}
	\underset{u\in[-1,1]^{2N}}{\min} \sum\limits_{i=1}^N |v_i-\bar v|^2+(|x_i-x_m|^2-R_f^2)^2+\lambda|u_i|^2\,,\quad \lambda>0\,,
\end{equation} 
where $\bar v,R_f$ are the desired flocking velocity and radius, respectively. Figure \ref{mill2flock} illustrates the transition from the milling to the flocking regime. 	
		\begin{figure}[!ht]
		\centering
		\includegraphics[width=0.33\textwidth]{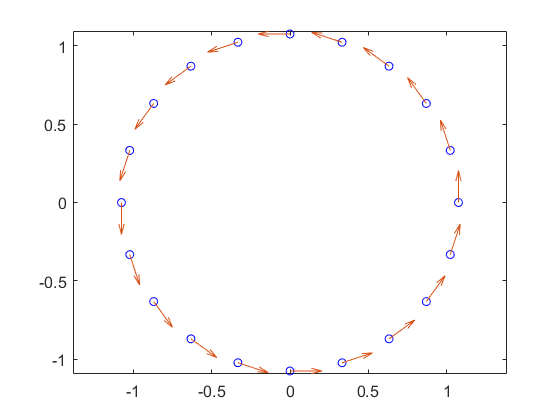}\hfill
		\includegraphics[width=0.33\textwidth]{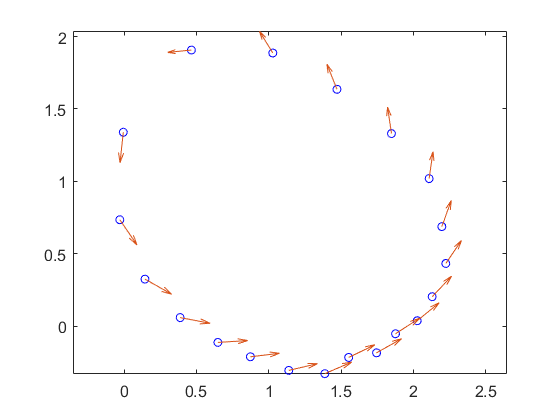}\hfill
		\includegraphics[width=0.33\textwidth]{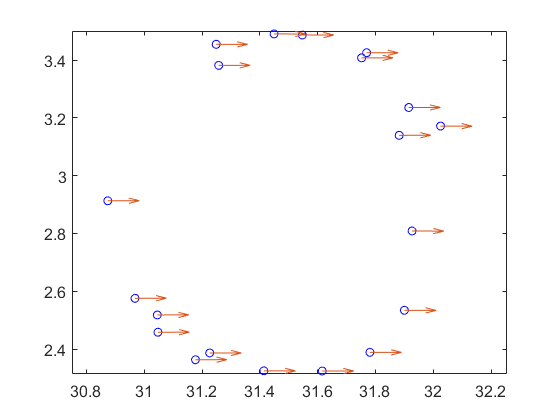}\\
		\includegraphics[width=0.49\textwidth]{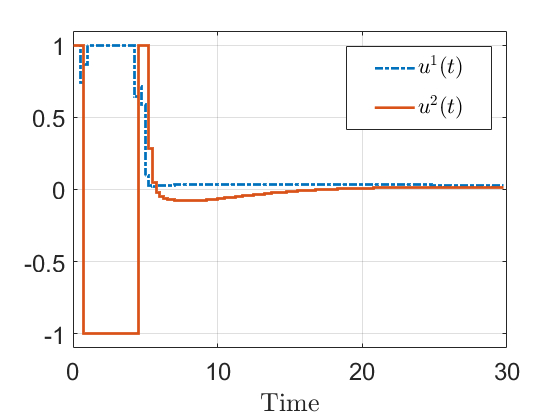}\hfill
		\includegraphics[width=0.49\textwidth]{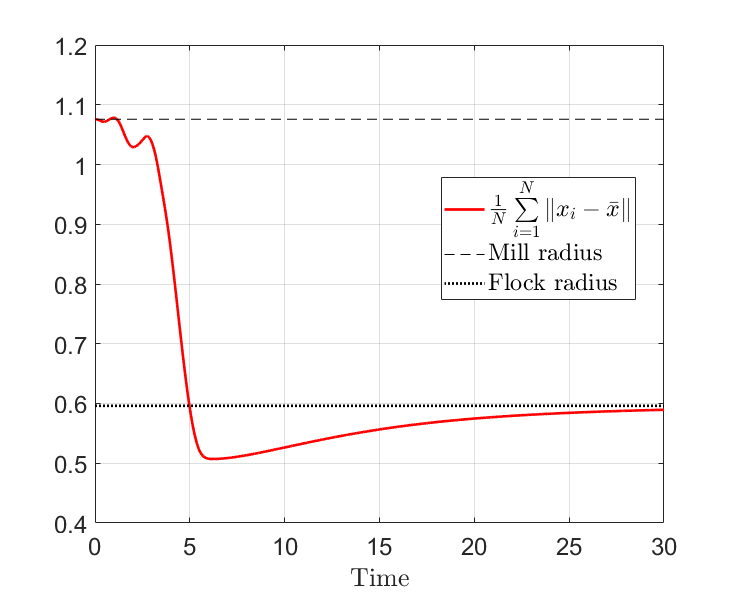}
		\caption{Controlled transition from mill to flock for the power-law potential ($a=4,b=1,\alpha=10,\beta=3,N=20$), using the feedback \eqref{instfeed2}. Top: swarm at $t=0,4,40$. Bottom left: a sample control signal. Bottom right: evolution of the swarm radius, from a stable mill to a stable flock. }\label{mill2flock}
	\end{figure}
	
	
	\section{Proof of Theorem \ref{mainthm}}\label{sec_proof_mainthm}
	This section is devoted to provide the full detail of the proof of Theorem \ref{mainthm}, sketched in Section \ref{sec_bounded_interactions}.
	
	\subsection{Proof of Step 1}\label{sec_proof_step1}
	In this first step, the goal is to steer the control system from any initial point to a point in $\Omega_\eps$. 
	
	\subsubsection*{Step 1.1. Jurdjevic-Quinn stabilization}
	Let $
		\gamma>\max\left(1,\frac1M \sqrt{\frac{\alpha^3}{\beta}}\right)
	$
	be fixed. We apply the feedback control  
	\begin{equation}
		u_i(v_i)=
		\begin{cases}
			0& \text{if }\, |v_i|\geq 2\gamma\sqrt{\frac{\alpha}{\beta}},\\
			-M\frac{v_i}{|v_i|}\left(2-\frac{|v_i|}{\gamma\sqrt{\frac{\alpha}{\beta}}}\right)& \text{if }\, |v_i|\in\left(\gamma\sqrt{\frac{\alpha}{\beta}},2\gamma\sqrt{\frac{\alpha}{\beta}}\right),\\
			-M\frac{v_i}{|v_i|}& \text{if }\, |v_i|\in\left[\frac{ \sqrt{\frac{\alpha}{\beta}}}{\gamma},\gamma\sqrt{\frac{\alpha}{\beta}}\right],\\
			-M\frac{\gamma}{\sqrt{\frac{\alpha}{\beta}}} v_i & \text{if }\, |v_i|<\frac{ \sqrt{\frac{\alpha}{\beta}}}{\gamma}.
		\end{cases}\label{e-controlO}
	\end{equation}
%
	Since the control is Lipschitz with respect to the $(x,v)$ variables, we have existence and uniqueness of solutions of \eqref{syst} for a fixed initial condition. Note that the control law satisfies the constraint $|u_i|\leq M$ by construction. Setting the total energy of the system as
	\begin{equation*}
		V(t)=\frac12 \sum_{i=1}^N |v_i(t)|^2+{\frac1{2N}} \sum_{\substack{i,j=1\\ i\neq j}}^N W(x_i(t)-x_j(t)) ,
	\end{equation*}
	using \eqref{syst}, we have
	\begin{equation*}
			\dot V =\sum_{i=1}^N v_i\cdot \dot v_i+{\frac1{2N}} \sum_{\substack{i,j=1\\ i\neq j}}^N \nabla W(x_i-x_j)(\dot x_i-\dot x_j) =
			\sum_{i=1}^N \left((\alpha-\beta|v_i|^2)|v_i|^2+v_i\cdot u_i\right).
	\end{equation*}
	Notice that by skew-symmetry of $\nabla W(x_i-x_j)$, we have
	$$
	\sum_{\substack{i,j=1\\ i\neq j}}^N \nabla W(x_i-x_j)\cdot v_j=-\sum_{\substack{i,j=1\\ i\neq j}}^N \nabla W(x_i-x_j)\cdot v_i\,.
	$$ 
	Given $a_1=\frac{1}{\gamma} \sqrt{\frac{\alpha}{\beta}}$ and $a_2=\gamma\sqrt{\frac{\alpha}{\beta}}$, we split the indices $i=1,\ldots, N$ in the sets 
	$$
		I_{\infty}(t)=\left\{i:  |v_i(t)|>a_2\right\},\, I_{1}(t)=\left\{i: |v_i(t)|\in\left[a_1,a_2\right]\right\}\mbox{ and}\,
		I_{0}(t)=\left\{i:|v_i(t)|<a_1\right\}.
	$$
	Apply the control law \eqref{e-controlO} and notice that for every $i\in I_\infty(t)$ we obtain 
$$
	\left\{v_i\cdot u_i=0 \quad\textrm{or} \quad v_i\cdot u_i=-M|v_i|\left(2-\frac{|v_i|}{\gamma\sqrt{\frac{\alpha}{\beta}}}\right)<0\right\} \quad\mbox{and}\quad (\alpha-\beta|v_i|^2)|v_i|^2<0.
$$
	We deduce
	\begin{multline*}
	\dot V \leq \sum_{i\in I_\infty(t)}(\alpha-\beta|v_i|^2)|v_i|^2+\sum_{i\in I_1(t)}|v_i|(\alpha|v_i|-\beta|v_i|^3-M)\\
	+\sum_{i\in I_0(t)}\left(\alpha-\beta|v_i|^2-M\frac{\gamma}{\sqrt{\frac{\alpha}{\beta}}}\right)|v_i|^2.
	\end{multline*}
	The maximum of the function $v\to \alpha v-\beta v^3$ over $v\geq 0$ is $\albe<M$. We have $\alpha|v_i|-\beta|v_i|^3-M<0$ and $\alpha-M\frac{\gamma}{\sqrt{\frac{\alpha}{\beta}}}<0$ by the choice of $\gamma$. This implies that
	\begin{equation}\label{e-stimaV}
		\dot V\leq \sum_{i\in I_\infty(t)}(\alpha-\beta|v_i|^2)|v_i|^2-\sum_{i\in I_1(t)\cup I_0(t)}\beta|v_i|^4.
	\end{equation}
	The right-hand side is then nonpositive being the sum of two nonpositive terms, hence $\dot V\leq 0$. 
	Note that we cannot directly apply the LaSalle invariance principle to the system, because we cannot ensure boundedness of the trajectories and the functional $V$ is not proper.
	
	Nevertheless, $V$ is bounded below, since both $|v|$ and $W$ are bounded below, as a consequence of the corresponding assumption on $U$. Since $V(t)$ is decreasing, hence bounded above, both $v_i(t)$ and $W(x_i(t)-x_j(t))$ are bounded. Thanks to the assumption \ref{U1}, $\nabla W$ is bounded. This implies that $\dot v_i(t)$ is bounded too.
	
	\begin{lemma}\label{l-v0}
		{The system \eqref{syst} with control law \eqref{e-controlO} satisfies}	$\lim_{t\to +\infty} v_i(t)=0$ for $i=1,\ldots, N$. 
	\end{lemma}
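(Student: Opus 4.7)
The plan is a Barbalat-style argument that deliberately bypasses LaSalle, whose applicability has already been ruled out by the observation that $V$ is not proper. The paragraph preceding the lemma has already supplied the three bookkeeping facts I need: (i) $V(t)$ is bounded below and nonincreasing, hence convergent, so $-\dot V\in L^1(0,+\infty)$; (ii) each $|v_i(t)|$ is uniformly bounded, because $V(t)\leq V(0)$ while $|W|\leq C$ by \ref{U1}; (iii) each $|\dot v_i(t)|$ is uniformly bounded, thanks to (ii), to the bounded feedback \eqref{e-controlO}, and to the boundedness of $\nabla W$. In particular, $t\mapsto |v_i(t)|^4$ has a uniformly bounded derivative $4(v_i\cdot\dot v_i)|v_i|^2$, hence is Lipschitz and uniformly continuous on $[0,+\infty)$.

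The key step I would take is to upgrade \eqref{e-stimaV} into a uniform, regime-independent coercive bound of the form
\begin{equation*}
-\dot V(t)\;\geq\; c\sum_{i=1}^N |v_i(t)|^4,\qquad c\;:=\;\beta\,\min\!\bigl(1-\gamma^{-2},\,1\bigr)\;>\;0.
\end{equation*}
For $i\in I_0(t)\cup I_1(t)$ this is already in \eqref{e-stimaV} with constant $\beta$. For $i\in I_\infty(t)$, the inequality $\beta|v_i|^2>\gamma^2\alpha$ gives $\alpha\leq \gamma^{-2}\beta|v_i|^2$, so $(\beta|v_i|^2-\alpha)|v_i|^2\geq \beta(1-\gamma^{-2})|v_i|^4$, meaning the contribution $(\alpha-\beta|v_i|^2)|v_i|^2$ in \eqref{e-stimaV} is bounded above by $-\beta(1-\gamma^{-2})|v_i|^4$. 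Note that $c>0$ because $\gamma>1$ by construction.

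Integrating on $[0,+\infty)$ and using (i) yields $\int_0^{+\infty}|v_i(t)|^4\,dt<+\infty$ for every $i$. Combined with the uniform continuity of $t\mapsto|v_i(t)|^4$ established above, Barbalat's lemma then delivers $|v_i(t)|^4\to 0$, that is, $v_i(t)\to 0$ as $t\to+\infty$.

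The main obstacle, already flagged by the paper after \eqref{e-stimaV}, is precisely that $V$ is not proper, so trajectories may escape in the $x$-variable and classical LaSalle does not apply. The proposed argument circumvents this by never attempting to bound $x(t)$: it extracts asymptotic information on the velocities alone, from $L^1$-integrability of $|v_i|^4$ and uniform continuity. The complementary ``closeness to $\Omega_\eps$'' property, where the position-dependent forces $F_i(x)$ enter, is deferred to Step~1.2, which modifies the feedback to $u_i=-(\alpha-\beta|v_i|^2)v_i+F_i(x)-\eta\,v_i/|v_i|$ on top of the vanishing of $v_i$ obtained here.
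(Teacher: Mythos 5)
Your proof is correct, and it reaches the conclusion by a recognizably different packaging of the same dissipation mechanism. The paper argues by contradiction: if some $|v_i(t_k)|$ stays above a threshold along $t_k\to\infty$, boundedness of $\dot v_i$ forces $|v_i|>C/2$ on intervals of uniform length, and integrating \eqref{e-stimaV} over these intervals drives $V\to-\infty$, contradicting the lower bound on $V$; this is essentially a hand-made instance of Barbalat's lemma. You instead invoke Barbalat directly, after first strengthening \eqref{e-stimaV} to the regime-independent coercive bound $-\dot V\geq c\sum_i|v_i|^4$ with $c=\beta(1-\gamma^{-2})>0$, using $\alpha\leq\gamma^{-2}\beta|v_i|^2$ on $I_\infty$; together with the $L^1$ bound on $-\dot V$ (from $V$ nonincreasing and bounded below) and the uniform continuity of $|v_i|^4$ (from the bounds on $v_i$ and $\dot v_i$ recorded before the lemma), this yields $v_i(t)\to0$. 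What your version buys is twofold: the explicit treatment of the $I_\infty$ regime, which the paper's one-line ``discarding the first term'' leaves implicit (if the offending agent happens to lie in $I_\infty$, its contribution sits in the discarded sum and needs exactly the extra estimate you supply), and the quantitative byproduct $\int_0^\infty|v_i|^4\,dt<\infty$. What the paper's version buys is self-containedness: it never appeals to Barbalat and works directly with the two-regime form of \eqref{e-stimaV}, at the price of the sequence-of-intervals bookkeeping. Both proofs rest on the same three ingredients (the dissipation estimate, $V$ bounded below, $\dot v_i$ bounded), so there is no gap in your argument.
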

	
	\begin{proof}
		By contradiction, if this is not the case, there exists an index $i$ and a sequence of times $t_k\to\infty$ such that $|v_i(t_k)|>C$. Since $\dot v_i$ is bounded, this implies that there exists a uniform $\tau$ such that $|v_i(t)|>C/2$ for all $t\in(t_k-\tau,t_k+\tau)$. By using this property in \eqref{e-stimaV} discarding the first term in the right-hand side, we infer that $V(t)\to -\infty$. This raises a contradiction.
	\end{proof}
	
	\begin{lemma}\label{l-F0}
			{The system \eqref{syst} with control law \eqref{e-controlO} satisfies}
		$\lim_{t\to +\infty} F_i(x(t))=0$ for $i=1,\ldots, N$. 
	\end{lemma}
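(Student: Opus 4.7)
The plan is to deduce $F_i(x(t))\to 0$ from Lemma~\ref{l-v0} by applying Barbalat's lemma to the velocity equation. Starting from \eqref{syst}, rewrite
\[
F_i(x(t)) \;=\; -\dot v_i(t) + (\alpha-\beta|v_i(t)|^2)v_i(t) + u_i(v_i(t)).
\]
Since $v_i(t)\to 0$ by Lemma~\ref{l-v0}, the self-propulsion term $(\alpha-\beta|v_i|^2)v_i$ tends to $0$. The feedback law \eqref{e-controlO} is continuous in $v_i$ and its fourth branch gives $u_i(0)=0$, so $u_i(v_i(t))\to 0$ as well. Therefore the lemma reduces to showing that $\dot v_i(t)\to 0$ as $t\to+\infty$.

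To get this, I will apply Barbalat's lemma to $\dot v_i$. The integral condition is immediate: since $v_i(t)$ converges (to $0$), the integral $\int_0^T \dot v_i(s)\,ds=v_i(T)-v_i(0)$ has a finite limit as $T\to\infty$. The remaining task is to show that $\dot v_i$ is uniformly continuous on $[0,+\infty)$. I will establish this by proving that $\ddot v_i$ is essentially bounded, so that $\dot v_i$ is Lipschitz.

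Boundedness of $\ddot v_i$ is obtained by differentiating the velocity equation and estimating the three resulting contributions. The term $\tfrac{d}{dt}\bigl[(\alpha-\beta|v_i|^2)v_i\bigr]$ is bounded because $v_i$ is bounded (already observed before Lemma~\ref{l-v0}) and $\dot v_i$ is bounded, the latter following from the velocity equation together with assumption \ref{U1} (which gives $|F_i|\leq C$) and the constraint $\|u\|\leq M$. The interaction term yields
\[
\frac{d}{dt} F_i(x(t)) \;=\; \frac1N\sum_{j\neq i} \mathrm{Hess}\,W(x_i-x_j)\,(v_i-v_j),
\]
which is bounded since $|U''|\leq C$ under \ref{U1} and $v$ is bounded. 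Finally, $\dot u_i$ is essentially bounded because the feedback \eqref{e-controlO} is piecewise Lipschitz with bounded generalized Jacobian in $v_i$ (computed directly on each of the four branches), and $\dot v_i$ is bounded. Summing the three contributions yields $\ddot v_i\in L^\infty$, hence $\dot v_i$ is uniformly continuous.

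The main obstacle is really this last bookkeeping step: the feedback is only Lipschitz (not $C^1$), so $\dot u_i$ must be handled in the almost-everywhere sense, but this is enough to conclude uniform continuity of $\dot v_i$. Once Barbalat gives $\dot v_i\to 0$, the identity displayed above yields $F_i(x(t))\to 0$ for every $i$, completing the proof.
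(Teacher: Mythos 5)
Your proposal is correct, but it follows a genuinely different route from the paper. You reduce the claim, via the identity $F_i(x(t))=-\dot v_i(t)+(\alpha-\beta|v_i(t)|^2)v_i(t)+u_i(v_i(t))$ and Lemma \ref{l-v0}, to showing $\dot v_i(t)\to 0$, which you obtain from Barbalat's lemma: the integral condition is free since $v_i$ converges, and uniform continuity of $\dot v_i$ follows from a Lipschitz bound obtained by combining boundedness of $v_i$ and $\dot v_i$, boundedness of $\mathrm{Hess}\,W$ under \ref{U1} (here one should note that boundedness of the Hessian of $W(x)=U(|x|)$ uses not only $|U''|\leq C$ but also $|U'(r)|/r\leq C$ near $r=0$, which holds because $U'(0)=0$ and $|U''|\leq C$), and the fact that the feedback \eqref{e-controlO} is globally Lipschitz in $v_i$; your handling of $\dot u_i$ only in the almost-everywhere sense is fine, since Lipschitz continuity of $\dot v_i$ is all that Barbalat needs. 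The paper instead argues by contradiction and essentially re-proves the relevant Barbalat-type mechanism by hand: it first shows that $t\mapsto\nabla W(x_i(t)-x_j(t))$ is $L$-Lipschitz (same ingredients as yours), then assumes $F_i(x(t_k))$ accumulates at a nonzero value (or blows up) along $t_k\to\infty$, and integrates the velocity equation over a short window after $t_k$ to show $|v_i(t_k+\tau)|$ would stay bounded away from $0$, contradicting Lemma \ref{l-v0}. Your version is more modular and cites a standard lemma; the paper's self-contained contradiction argument explicitly covers the case $|F_i(x(t_k))|\to+\infty$, which is vacuous under \ref{U1} but makes the proof directly reusable in the unbounded-potential setting of Theorem \ref{mainthm_variant} (Section \ref{s-21}), where your argument would also adapt but only after noting that along the trajectory the pairwise distances stay bounded away from $0$ so that the Hessian bound holds there too.
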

	
	\begin{proof}
		Since the $v_i$s are bounded, then the functions $x_i(t)-x_j(t)$ are Lipschitz. Since  \ref{U1} holds, then both $\nabla W(x_i(t)-x_j(t))$ and its derivative are bounded; hence functions $\nabla W(x_i(t)-x_j(t))$ are Lipschitz too, with a Lipschitz constant that we denote with $L$. 
		
		Assume now, by contradiction, that there exists an index $i$ such that $F_i(x(t))$ does not converge to 0. Thus, there exists a sequence of times $t_k\to\infty$ such that 
		\begin{enumerate}
			\item either $F_i(x(t_k))\to \bar F$ for some non-zero vector $\bar F$;
			\item or $|F_i(x(t_k))|\to +\infty$.
		\end{enumerate}
		
		In the first case, for each $\eps>0$ there exists an index $K>0$ such that $\|F_i(x(t))-\bar F\|<\eps+L(t-t_k)$ for all $k>K$ and $t>t_k$. Recalling that $\lim_{t\to +\infty} v_i(t)=0$, take now $\eta>0$ sufficiently small and $k$ sufficiently large to have both $|v_i(t)|<\eta$ and $|\al v_i(t)-\beta v_i(t)|\cdot |v_i(t)|^2<\eta$ for all $t>t_k$. This implies $|\dot v_i(t_k+\tau)-\bar F|<2\eta+L\tau$ for all $\tau>0$. Since $|v_i(t_k)|<\eta$, then $|v_i(t_k+\tau)-\bar F\tau|<\eta+2\eta\tau+L\frac{\tau^2}2$ for $\tau>0$. Fix $\tau>0$ sufficiently small to have $L\frac{\tau^2}2<|\bar F|\frac{\tau}2$ and observe that this implies $|v_i(t_k+\tau)|>|\bar F|\frac{\tau}2-\eta-2\eta\tau$. Let $\eta\to 0$ and note that this implies $\lim_{k\to\infty}v_i(t_k+\tau)\neq 0$. This raises a contradiction.
		
		The second case is similar: consider the unit vectors $\frac{F_i(x(t))}{|F_i(x(t))|}$, that admit a converging subsequence (that we do not relabel) to an unitary vector $\bar F$. Following computations of the previous case, we have $|v_i(t_k+\tau)|>|F_i(x(t_k))| \frac{\tau}2-\eta-2\eta\tau$, which does not converge to $0$ for $\eta\to 0$ and $k\to\infty$. This raises a contradiction. 
	\end{proof}
	
	Finally, let us choose a time $T_0$ at which we stop the control strategy \eqref{e-controlO}. This choice is driven by correctly initializing the next step. Let $\eps'>0$ be a constant to be chosen later. Since both $v_i(t)$ and $F_i(x(t))$ converge to $0$, we choose a time $T_0$ at which $|v_i(T_0)|<\eps'$ and $|F_i(x(T_0))|<\eps'$ for $i=1,\ldots,N$.

	\subsubsection*{Step 1.2. Reaching $\Omega_\eps$ in finite time}
	We now steer each $v_i$ exactly to zero. {We first define the trajectory for each $v_i$: this in turn gives the trajectory of the $x_i$ by integration, and the control $u_i$ by identification in the second equation of \eqref{syst}}. Choose $T_{1,i}=T_0+\tfrac1{\eps'}|v_i(T_0)|$ and define
	$$
	v_i(t)=\begin{cases}
		v_i(T_0)-\eps'(t-T_0)\frac{v_i(T_0)}{|v_i(T_0)|}&\mbox{~~for~}t\in[T_0,T_{1,i}],\\
		0&\mbox{~~for~}t>T_{1,i}.
	\end{cases}
	$$
	Then choose $T_1=\max(T_{1,i})$ as the final time of the strategy.
	A direct computation shows that 
	\begin{equation}\label{e-xi12}
		|x_i(t)-x_i(T_0)|\leq \int_{T_0}^{T_{i,1}}|v_i(T_0)|-\eps'(t-T_0)\,dt\leq \frac{|v_i(T_0)|^2}{2\eps'}<\frac{\eps'}{2}
	\end{equation}
	for every $t\in[T_0,T_1]$. 
	{Since $\grad W(x_i-x_j)$ is $L$-Lipschitz continuous and bounded, as recalled in Lemma \ref{l-F0}, we infer that 
	$
	\|F_i(T_0+t)\|\leq \|F_i(T_0)\|+L\tfrac{\eps'}{2}<(1+\tfrac{L}2)\eps'.
	$
	Notice that $u$ bounded implies that all solutions of (1.1) are Lipschitz with respect to time. This implies that there exists $\bar x$ (that we cannot choose) such that} $(x(T_1),v(T_1))=(\bar x,0)\in\Omega_\eps$ by imposing $\eps'\leq \eps/(1+\tfrac{L}2)$.
	By a simple estimate in the second equation of \eqref{syst}, the control satisfies
	$$
	|u_i|\leq |\dot v_i|+(\alpha-\beta |v_i|^2)|v_i|+|F_i|< \eps'+\alpha \eps'+(1+\tfrac{L}2)\eps'\leq M,  
	$$
	by imposing $\eps'\leq M/(2+\alpha+\tfrac{L}2)$.
	Summing up, choosing $\eps'=\min\left(\frac\eps{1+\tfrac{L}2},\frac{M}{2+\alpha+\tfrac{L}2}\right)$, all conditions are satisfied.

	\subsection{Proof of Step 2}\label{sec_proof_step2}
	Step 2 is obvious, the control system \eqref{very_simple_control_system} being straightforward to control. We do not provide any detail. \FR{Given a connected neighborhood $\mathcal{N}$ of $(\bar x,0)$ inside $\Omega_\eps$, we are then able to steer the system to any chosen point in $\mathcal{N}$.}
	
	\subsection{Proof of Step 3}\label{sec_proof_step3}
	
	\subsubsection*{Step 3.1: Reaching flocks}
	{Fix a unit vector $\bar v$ and note that $\Omega_\eps$ open implies that there exists $\delta>0$ such that the configuration $(\bar x, \delta \bar v)$ belongs to the neighborhood $\mathcal{N}$ of $(\bar x, 0)$ given at Step 2}. Then, we can steer the system from $(\bar x,0)$ to $(\bar x, \delta \bar v)$ at a time $T_2>T_1$ with a control $u=\bar u+z$ satisfying $\|u\|<2\eps<M$, again by a local controllability argument.
	
	We then choose the controls $u_i=F_i$ on the time interval $[T_2,+\infty)$. The velocity variables are then  the solutions of 
	\begin{equation}\label{e-vfree}
		\dot v_i(t)=(\alpha -\beta|v_i(t)|^2)v_i(t),\qquad v_i(T_2)= \tfrac{\delta}2 \bar v.
	\end{equation}
	They all coincide at each time, i.e., $v_i(t)=v_j(t)$, hence relative positions are all constant with respect to time, i.e $x_i(t)-x_j(t)=\bar x_i-\bar x_j$. This in turn implies that interaction forces keep being constant with respect to time, thus $\|u_i(t)\|=\|F_i(t)\|=\|F_i(T_2)\|<\eps$, since $(x(T_2),v(T_2))\in\Omega_\eps$.
	
	Moreover, all velocities converge to $\sqrt\frac\alpha\beta \bar v$ for $t\to +\infty$, since they solve \eqref{e-vfree}. This implies that the system converges to an $\eps$-flock, as stated.
	
	\begin{remark}
		The motion of the velocity variables $v_i(t)$, solutions of \eqref{e-vfree}, exactly follows \emph{heteroclinic trajectories}, in the sense that $v_i(t)$ passes from (a neighborhood of) the unstable equilibrium $0$ to the asymptotically stable family of equilibria $\left\{\|v\|=\sqrt\frac\alpha\beta\right\}$. The existence of such heteroclinic trajectories is certainly one of the main interesting features of the dynamics of \eqref{syst}, promoting convergence to flocks or mills.
	\end{remark}

	\subsubsection*{Step 3.2: Passing from a flock to another flock}
	Assume that the system is at (or near) a flock of velocity $\bar v^0$. We want to steer the system to another flock, of velocity $\bar v^1$. Along the motion, the relative positions $x_i-x_j$ will remain constant.
	
	The strategy that we use here is by \emph{quasi-static deformation}. Take a continuous path $\tau\in[0,1]\mapsto\bar v(\tau)$ such that $\bar v(0)=\bar v^0$ and $\bar v(1)=\bar v^1$, satisfying $\|\bar v(\tau)\|=\sqrt\frac\alpha\beta$ for every $\tau\in[0,1]$, e.g., the shortest arc on the circle. 
{Given any fixed $\tau\in[0,1]$, the flock under consideration is $(\bar x(t) = \bar x(0)+\bar v(\tau) t, \bar v(\tau))$.}
Since relative positions do not change, forces $F_i$ do not change and then, for each $\tau$, we have a flock of velocity $\bar v(\tau)$.	
	Of course, the corresponding path of flocks, parametrized by $\tau\in[0,1]$ is \emph{not} a solution of \eqref{syst}. It is rather to be thought of as a path of equilibrium points for the dynamics \eqref{syst}.
	Following the idea of \cite{coron-trelat04,coron-trelat06}, we track this path, in large time, by designing appropriate feedback controls. 
{To this aim, for any given fixed $\tau\in[0,1]$, we linearize the control system \eqref{syst} at the corresponding flock: we set
$$
x_i(t) = \bar x_i(0)+\bar v(\tau)t+\delta x_i(t), \qquad v_i(t)=\bar v(\tau)+\delta v(t) .
$$
Plugging in \eqref{syst}, using that $F_i(\bar x(t))=0$ and that $\vert\bar v(\tau)\vert=\sqrt{\frac{\alpha}{\beta}}$, we get, at the first order, the linear system
$$
\delta\dot x_i(t) = \delta v_i(t),\qquad \delta \dot v_i(t) = 2\langle\bar v(\tau),\delta v(t)\rangle\bar v(\tau)+dF_i(\bar x_i(0)+\bar v(\tau)t).\delta x(t)+u_i(t) .
$$
We make a change of control by setting $\delta u_i(t) = -2\langle\bar v(\tau),\delta v(t)\rangle\bar v(\tau)-dF_i(\bar x_i(0)+\bar v(\tau)t).\delta x(t)+w_i(t)$, thus obtaining the very simple control system
$$
\delta\dot x_i(t) = \delta v_i(t),\qquad \delta \dot v_i(t) = w_i(t),
$$
i.e., we recover the system \eqref{very_simple_control_system}.
}

{This has been done for $\tau$ fixed. Now, the idea is to perform the above deformation slowly in time, by setting $\tau=\varepsilon t$, for some $\varepsilon>0$ small enough, and thus $t\in[0,1/\varepsilon]$, and compensate for the errors by designing an adequate feedback control.}

Of course, for every fixed value of $\tau$, the above control system is linear autonomous, of the form $\dot X(t) = AX(t)+Bu(t)$ for some matrices $A$ and $B$. It obviously satisfies the {Kalman controllability condition} and is thus controllable and also feedback stabilizable (for instance by standard pole shifting, see, e.g., \cite{LeeMarkus,Sontag,trelatbook2005}). But now, along the path of flocks {that we want to track slowly in time}, we do not have anymore a linear autonomous control system, but a linear \emph{instationary} control system, of the form $\dot X(t) = A(\varepsilon t)X(t)+B(\varepsilon t)u(t)$ for some matrices $A(\varepsilon t)$ and $B(\varepsilon t)$ depending on time {but varying slowly in time}. For every $\tau$, the pair $(A(\tau),B(\tau))$ still satisfies the Kalman condition. For linear instationary control systems the Kalman condition is not sufficient to ensure controllability nor stabilizability properties (see counterexamples, e.g., in \cite{khalil,Sontag,trelatbook2005}). But here, we follow the path slowly in time: by setting $\tau=\varepsilon t$, the abovementioned control system takes the form $\dot X(t) = A(\varepsilon t)X(t)+B(\varepsilon t)u(t)$ and is therefore a  \emph{slowly-varying} (in time) linear control system, satisfying the Kalman condition. As explained in detail in \cite{coron-trelat04}, and according to an argument of \cite[Chap. 9.6]{khalil}, if $\varepsilon>0$ is small enough then the Kalman condition is still sufficient to ensure that this slowly-varying linear control system can be feedback stabilized by usual pole shifting, with a feedback control of the form $u(t) = K(\varepsilon t) X(t)$. Note anyway that such a feedback is also slowly varing in time, so is not a ``pure" feedback. One may want to obtain a feedback, not depending on time, but defined piecewise in time. This is possible by slightly modifying the above definition of the feedback. The resulting \emph{staircase method} has been used, e.g., in \cite{PoucholTrelatZuazua}.
	
	Eventually, such feedback controls make it possible to track the path of flocks and thus steer the control system, in time $1/\varepsilon$, to any point of a neighborhood of $\bar v_1$. 
	
	If one moreover aims to choose precise $x$-positions (keeping anyway the same relative positions as those of the initial flock), it is sufficient to observe that all such configurations differ from a translation vector $X$. Therefore, it suffices to use a quasi-static deformation on the positions as well.

	\subsubsection*{Proof of Steps 3.3, 3.4, 3.5: reaching mills, flock rings}
	The strategy, described in Section \ref{sec_bounded_interactions}, is similar to what has been described above, and we thus do not give any detail.

	\section{Proof of Theorem \ref{mainthm_variant}}\label{sec_proof_thm2}
	
	\subsection{Proof of the first statement} \label{s-21}
	The proof of the first statement is almost identical to the proof of Theorem \ref{mainthm}, with the following differences:
	\begin{itemize}
		\item In Step 1.1, we follow the proof until $\dot V\leq 0$, due to \eqref{e-stimaV}. Since $V$ is bounded below, both $v_i$ and $W_i(x_i-x_j)$ are bounded below. Now, using the assumptions \ref{U2}, boundedness of $W$ implies that $x_i(t)-x_j(t)$ is bounded away from $0$ by a constant $2\ell>0$, hence $\nabla W( x_i(t)-x_j(t))$ is bounded. This in turn implies that $\dot v_i(t)$ is bounded as well. One can then prove Lemmas \ref{l-v0} and \ref{l-F0} in this case too.
		\item In Step 1.2, we observe that \eqref{e-xi12} ensures that $|x_i(t)-x_j(t)|>\ell$, provided $\eps'<\sqrt{2\ell}$. We then use \ref{U2} to ensure that $\nabla W$ is $L$-Lipschitz continuous for $|x_i(t)-x_j(t)|>\ell${, for some $L>0$}. We now choose $\eps'=\min(\sqrt{2\ell},\tfrac{2}{L},\tfrac\eps2)$ to ensure that $(x(T_1),v(T_1))\in\Omega_\eps$.
	\end{itemize}
	
	\subsection{Proof of the second statement}\label{s-22}
	We follow the sketch of the proof given in Section \ref{sec_unbounded_interactions}.
	
	\subsubsection{Proof of Step 1} 
	
	\paragraph{\bf Step 1.1: Fictitious purely radial potential} 
	The proof is based on the method of ``artificial potential field'', which is widely used in robotics (see, e.g., \cite[Chap. 7]{spong}). Replace the  potential $U$ with a purely repulsive potential $\tilde U$, that is \FR{chosen as follows. Define $\eta=\frac14(M-(M_{\alpha,\beta}+\tilde M_F))>0$ and choose $R_0>0$ such that $|U'(r)|<\eta$ for all $r\geq R_0$. This is possible because $\lim_{r\to+\infty}U'(r)=0$.  Take now $\phi:(0,+\infty)\to\R$ a $C^\infty$ function with bounded $C^1$ derivative, satisfying}
	\FR{\begin{itemize}
	\item $\phi(r)=1$ for $r\in (0,R_0]$ and $\phi(r)=0$ for $r\in[R_0+1,+\infty)$;
	\item $\phi$ is decreasing for $r\in[R_0,R_0+1]$.
	\end{itemize}
	Define 
	$$\tilde U'(r)=\phi(r)(U'(r)-\tilde M_F)-\eta\frac{1}{1+r^2}$$}
	\FR{and $\tilde U(r)=\int_1^r \tilde U'(r)$. Then $\tilde U$ is $C^2$, satisfies $\lim_{r\to+\infty} \tilde U'(r)=0$ and \ref{U2}. More crucially, $\tilde U'(r)<0$ for all times, i.e., $\tilde U$ is purely repulsive. Moreover, $|\tilde U'(r)-U'(r)|\leq \tilde M_F+\eta$ for $r\in (0,R_0)$, and $|\tilde U'(r)-U'(r)|\leq |\phi(r)-1| |U'(r)|+\phi(r)\tilde M_F+\eta\leq \eta+\tilde M_F+\eta$ for $r\in [R_0,+\infty)$. Then
	\begin{equation}\label{e-U'}
	|\tilde U'(r)-U'(r)|\leq \tilde M_F+2\eta.
	\end{equation}}
	\FR{Define now $\tilde W(x)=\tilde U(|r|)$ and choose the control $u_i=F_i(x)-\tilde F_i(x) +w_i$ (see \eqref{control_fictitious}), so that the new control system is \eqref{syst2}, with the new controls $w_i$ satisfying $\|w_i\|\leq \tilde M$ with $\tilde M$ defined by \eqref{tildeM}. Note that \eqref{e-U'} implies 
	\begin{equation}\label{e-w-const}
	\tilde M\geq M-(\tilde M_F-2\eta)=\tilde M_{\alpha,\beta}+\eta	,
	\end{equation}
		i.e., the system \eqref{syst2} satisfies the assumptions of the first statement of Theorem \ref{mainthm}. This will be used in the next step.}
	
	\paragraph{\bf Step 1.2: Blowing-up all agents} 
	Fix $\eps>0$ to be chosen later. Applying Step 1 of Theorem \ref{mainthm} to the control system \eqref{syst2}, we can steer it to $\tilde \Omega_\eps = \{ (x,v)\ \mid\ v=0,\ \|\tilde F(x)\|<\eps \}$ in finite time $T_0$.  The crucial observation here is that \FR{$\tilde U'(r)\leq -\eta\frac{1}{1+r^2}$}, i.e., all forces are purely repulsive. We now study the configuration $(x(T_0),v(T_0))=(\bar x,0)\in \tilde \Omega_\eps$. Since it is a configuration of $N$ agents in the plane, the convex closure of positions $(x_1,\ldots, x_N)$ is a polygon of $n\leq N$ vertices, in which at least one of the internal angles is smaller than $\frac{n-2}{n}\pi$, thus smaller than $\frac{N-2}{N}\pi$.
	
	By relabelling indices, we assume that $x_N$ is one of those vertices. By a simple geometrical observation, all interaction forces point outwards of the polygon (see Figure \ref{f-angle1}). 
	\begin{figure}[h]
		\centerline{\tikzset{every picture/.style={line width=0.65pt}} 

\begin{tikzpicture}[x=0.6pt,y=0.6pt,yscale=-1,xscale=1]

\draw [line width=1.5]    (290,110) -- (200,180) ;
\draw [line width=1.5]    (290,110) -- (380,180) ;
\draw  [fill={rgb, 255:red, 153; green, 153; blue, 153 }  ,fill opacity=1 ] (320.78,134.57) .. controls (313.44,143.06) and (302.38,148.46) .. (290,148.46) .. controls (277.73,148.46) and (266.75,143.15) .. (259.41,134.79) -- (290,110) -- cycle ;
\draw    (291,110) -- (290.04,62) ;
\draw [shift={(290,60)}, rotate = 448.85] [color={rgb, 255:red, 0; green, 0; blue, 0 }  ][line width=0.75]    (10.93,-3.29) .. controls (6.95,-1.4) and (3.31,-0.3) .. (0,0) .. controls (3.31,0.3) and (6.95,1.4) .. (10.93,3.29)   ;
\draw   (291,110) -- (381,180) -- (351,250) -- (241,240) -- (201,180) -- cycle ;
\draw  [dash pattern={on 0.84pt off 2.51pt}]  (291,110) -- (338.29,81.04) ;
\draw [shift={(340,80)}, rotate = 508.52] [color={rgb, 255:red, 0; green, 0; blue, 0 }  ][line width=0.75]    (10.93,-3.29) .. controls (6.95,-1.4) and (3.31,-0.3) .. (0,0) .. controls (3.31,0.3) and (6.95,1.4) .. (10.93,3.29)   ;
\draw  [dash pattern={on 0.84pt off 2.51pt}]  (290,110) -- (251.79,90.89) ;
\draw [shift={(250,90)}, rotate = 386.57] [color={rgb, 255:red, 0; green, 0; blue, 0 }  ][line width=0.75]    (10.93,-3.29) .. controls (6.95,-1.4) and (3.31,-0.3) .. (0,0) .. controls (3.31,0.3) and (6.95,1.4) .. (10.93,3.29)   ;
\draw  [dash pattern={on 0.84pt off 2.51pt}]  (241,240) -- (299.31,81.88) ;
\draw [shift={(300,80)}, rotate = 470.24] [color={rgb, 255:red, 0; green, 0; blue, 0 }  ][line width=0.75]    (10.93,-3.29) .. controls (6.95,-1.4) and (3.31,-0.3) .. (0,0) .. controls (3.31,0.3) and (6.95,1.4) .. (10.93,3.29)   ;
\draw  [dash pattern={on 0.84pt off 2.51pt}]  (351,250) -- (280.77,81.85) ;
\draw [shift={(280,80)}, rotate = 427.33000000000004] [color={rgb, 255:red, 0; green, 0; blue, 0 }  ][line width=0.75]    (10.93,-3.29) .. controls (6.95,-1.4) and (3.31,-0.3) .. (0,0) .. controls (3.31,0.3) and (6.95,1.4) .. (10.93,3.29)   ;

\draw (246,145) node [anchor=north west][inner sep=0.75pt]    {angle $\leq \frac{N-2}{N} \pi $};
\draw (297,49) node [anchor=north west][inner sep=0.75pt]    {$z_{N}$};
\draw (300,107) node [anchor=north west][inner sep=0.75pt]    {$x_{N}$};

\end{tikzpicture}}
		\caption{The outer vector $z_N$.}\label{f-angle1}
	\end{figure}
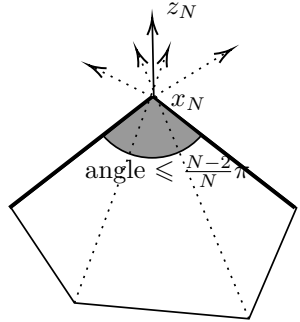
	More precisely, consider $z_N$ to be the unit vector in the direction of the outer angle bisector and note that each component of the force $\tilde F_{Nj}=\frac{1}{N} \nabla \tilde W(\bar x_N-\bar x_j)$ satisfies 
	$$
	\tilde F_{Nj}\cdot z_N\geq \frac1N |\tilde U' (|\bar x_N-\bar x_j|)|\cos(\tfrac{N-2}{2N} \pi).
	$$
	This in turn implies that 
	\begin{eqnarray}\label{e-TF}
	\|\tilde F_N\|&\geq& \tilde F_N\cdot z_N \geq \tfrac1N \sup_{j\neq N} |\tilde U' (|\bar x_N-\bar x_j|)|\cos(\tfrac{N-2}{2N} \pi)\\
	&\geq&\tfrac{1}N \eta \frac{1}{1+\inf_{j\neq N} |\bar x_N-\bar x_j|^2}\cos(\tfrac{N-2}{2N} \pi).\nonumber
	\end{eqnarray}
	\FR{Recall that the original potential $U(r)$ satisfies $\lim_{r\to+\infty}U'(r)=0$. Then, there exists a sufficiently large distance $\mathcal{L}>1$ such that $|U'(r)|<\eta$ for every $r>\mathcal{L}$. Take $\mathcal{L}$ satisfying this condition and define $\eps=\frac1N \eta\frac1{1+\mathcal{L}^2} \cos(\tfrac{N-2}{2N} \pi)$.
	The condition $(\bar x,0)\in \tilde \Omega_\eps$ implies $\|\tilde F_N\|<\eps$. This in turn implies that $|x_N-x_j|>\mathcal{L}$ for all $x\neq N$, due to \eqref{e-TF}}.
	
	Since the distance between $x_N$ and the other agents is larger than $\mathcal{L}$, then the components $\tilde F_{Nj}$ and $\tilde F_{jN}$ of the forces are \FR{smaller than $\eps$}. Hence, all configurations $((\bar x_1,\ldots, \bar x_{N-1},\bar x_N +\tau_N z_N),0)$ with $\tau_N>0$ belong to $\tilde \Omega_{\eps}$. By quasi-static deformation, we can steer $\bar x_N$ arbitrarily far from the other $\bar x_i$'s along the direction $z_N$, by choosing $\tau_N$ sufficiently large.
	{In this quasi-static deformation, the path of steady-states is $((\bar x(\tau)= (\bar x_1,\ldots, \bar x_{N-1},\bar x_N +\tau_N z_N), \bar v(\tau)=0)$. Each of them is a flock. For any given fixed $\tau\in[0,1]$, we linearize the control system \eqref{syst} at the corresponding flock: similarly as in Section \ref{sec_proof_step3}, we arrive at a linear system of the form $\delta\dot x_i(t) = \delta v_i(t)$, $\delta \dot v_i(t) = dF_i(\bar x_i(0)+\bar v(\tau)t).\delta x(t)+u_i(t)$, and then of the form \eqref{very_simple_control_system} by changing the control. We do not give more details since the procedure is the same as in Section \ref{sec_proof_step3}.
}
	
	We next apply the same strategy to the remaining $N-1$ agents and we steer one of them away from all others, while keeping $x_N$ \FR{further than $\mathcal{L}$} due to the choice of $\tau_N$. We repeat the procedure to the remaining $N-2$ agents, while keeping both $x_{N-1},x_N$ \FR{further than $\mathcal{L}$}, and so forth. In finite time, we are able to steer all agents to a configuration $(\bar x,0)\in \tilde \Omega_\eps$ with $|\bar x_i-\bar x_j|$ \FR{larger than $\mathcal{L}$}.
	
	\subsubsection{Proof of Step 2} Since all agents are far one from each other, we have \FR{$|\tilde F_i(x)|<N\eps<\eta$} for $i=1,\ldots,N$. \FR{This means that we can again change the control $w_i$ into $z_i=w_i-F_i(x)$, and that the constraint on the control satisfies $|z_i|\leq M-(\tilde M_F-2\eta)-\eta=\tilde M_{\alpha,\beta}+\eta$, due to \eqref{e-w-const}.}
	
	We can now steer all agents to a circular equidistributed configuration of large radius $R$, again by quasi-static deformation, as follows.
	Choose a point $x^*$ of the plane, that does not belong to any of the lines passing through $(\bar x_i,\bar x_j)$ and apply a coordinate translation to have $x^*=0$. {Consider the half-line starting at $0$ and passing through $\bar x_i$, and define $\tilde x_i$ as the point on the half-line at distance  
$
		R>\frac{\alpha}{\beta \eps}
$ from 0}. We want to steer each particle $x_i(t)$ starting at $\bar x_i$ to such $\tilde x_i$. The crucial observation is that each pairwise distance $|x_i(t)-x_j(t)|$ needs to be kept  larger \FR{than $\mathcal{L}$} to ensure that \FR{$|\tilde F_i(x)|<\eta$} along the motion. 
	
	Notice that each angle $\widehat{\bar x_i x^* \bar x_j}$ is nonzero for $i\neq j$ by the choice of $x^*=0$, hence there exists a minimal angle $\theta>0$. Consider now one of the indices $i$ realizing the maximal distance $|\bar x_i|$ (that we assume to be the index $1$) and move it along the quasi-static trajectory $x_1(\tau)=\bar x_1 +\tau(\tilde x_1-\bar x_1)$. Since $\bar x_1$ was chosen to realize the maximal distance, for each $j\neq 1$ the triangle with vertices $0,\bar x_j, x_1(\tau)$ has an internal angle $\alpha_j$ in $\bar x_j$ that is increasing with respect to time, hence the distance $|\bar x_j-x_1(\tau)|$ is increasing too (see Figure \ref{f-angle2}, left).
	
	\begin{figure}[h]
		\centerline{\tikzset{every picture/.style={line width=0.75pt}} 

\begin{tikzpicture}[x=0.75pt,y=0.75pt,yscale=-.9,xscale=.9]

\draw   (190,100) -- (40,100) -- (140,50) -- cycle ;
\draw  [dash pattern={on 0.84pt off 2.51pt}]  (170,100) -- (260,100) ;
\draw  [dash pattern={on 0.84pt off 2.51pt}]  (140,50) -- (220,100) ;
\draw   (486,50) -- (526,120) -- (346,120) -- cycle ;
\draw  [dash pattern={on 0.84pt off 2.51pt}]  (446,70) -- (526,120) ;
\draw  [dash pattern={on 0.84pt off 2.51pt}]  (546,20) -- (486,50) ;
\draw  [dash pattern={on 0.84pt off 2.51pt}]  (546,20) -- (526,120) ;
\draw  [dash pattern={on 0.84pt off 2.51pt}]  (140,50) -- (260,100) ;

\draw (32,102.4) node [anchor=north west][inner sep=0.75pt]    {$x^{*}$};
\draw (337,122.4) node [anchor=north west][inner sep=0.75pt]    {$x^{*}$};
\draw (179,102.4) node [anchor=north west][inner sep=0.75pt]    {$\overline{x}_{1}$};
\draw (528,123.4) node [anchor=north west][inner sep=0.75pt]    {$\tilde{x}_{1}$};
\draw (208,102.4) node [anchor=north west][inner sep=0.75pt]    {$x_{1}( \tau )$};
\draw (135,29.4) node [anchor=north west][inner sep=0.75pt]    {$\overline{x}_{j}$};
\draw (427,49.4) node [anchor=north west][inner sep=0.75pt]    {$\overline{x}_{2}$};
\draw (551,3.4) node [anchor=north west][inner sep=0.75pt]    {$\tilde{x}_{2}$};
\draw (457,27.4) node [anchor=north west][inner sep=0.75pt]    {$x_{2}( \tau )$};
\draw (263,87.4) node [anchor=north west][inner sep=0.75pt]    {$\tilde{x}_{1}$};

\end{tikzpicture}}
		\caption{Left: Moving $x_1(\tau)$ increases the distance. Right: the minimum distance when moving $x_2(\tau)$ is realized by the right triangle.}
		\label{f-angle2}
	\end{figure}
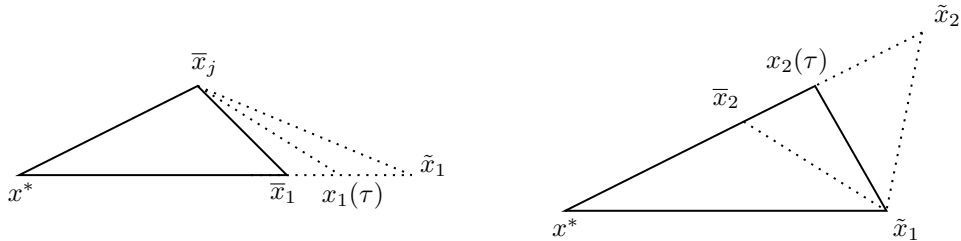
	
	We now choose one of the indices $i=2,\ldots N$ realizing the maximal distance $|x^*-\bar x_i|$, that we assume to be the index $2$, and move it along the quasi-static trajectory $x_2(\tau)=x^*+\bar x_2 +\tau(\tilde x_2-\bar x_2)$ 
{(and corresponding velocity $\bar v(\tau)=0$).}
It is then clear that the distance $|\bar x_j-x_2(\tau)|$ is increasing for all $j=3,\ldots N$, due to the same observation as above. Instead, the distance $|\tilde x_1-x_2(\tau)|$ can eventually decrease, up to the minimum that is realized when the triangle with vertices $x^*, x_2(\tau), \tilde x_1$ is right in $x_2(\tau)$,  see Figure \ref{f-angle2}, right. Such a minimal distance is thus larger than $R\sin(\theta)$, where $\theta$ is the minimal angle given above. By choosing $R>\FR{\mathcal{L}}/\sin(\theta)$, we are ensured that the minimal distance is greater than $\FR{\mathcal{L}}$. 
	
	Repeat the same construction for the indices $3$ to $N$ and hence steer all agents to a circle of radius $R$. By contruction, the pairwise distance is larger than $\FR{\mathcal{L}}$. Rearrange indices on the circle to have $\tilde x_i=R(\cos(\alpha_i),\sin(\alpha_i))$ with $0< \alpha_1<\alpha_2<\ldots \leq \alpha_N\leq2\pi$. Consider now the target equidistributed configuration with radius $R$: we simultaneously steer each $\tilde x_i$ to $\hat x_i=R(\cos(\tfrac{2i\pi}{N}),\sin(\tfrac{2i\pi}{N}))$ by using again a quasi-static deformation along the path
	$$
	x_i(\tau)=R(\cos((1-\tau)\alpha_i +\tau \tfrac{2i\pi}{N}),\sin((1-\tau)\alpha_i +\tau \tfrac{2i\pi}{N})) 
	$$
{(and corresponding velocity $\bar v(\tau)=0$).}
	It is easy to verify that the simultaneous displacement along the circle ensures that the minimal distance $|x_i(\tau)-x_j(\tau)|$ is realized either at the beginning or at the end of the deformation, hence in all cases $|x_i(\tau)-x_j(\tau)|>\FR{\mathcal{L}}$.

	\subsubsection{Proof of Step 3} 
	At the end of Step 3, we have steered the system to a circular equidistributed configuration $(\hat x,0)$ of large radius $R>\bar R$ in $\tilde \Omega_\eps$. Since pairwise distances are arbitrarily large, we also have $(\hat x,0)\in\Omega_{\eps'}$ for an arbitrarily small $\eps'>0$, i.e., we can go back to the original system \eqref{syst} with the potential $W$. Since the system \eqref{syst} is locally controllable around $(\hat x,0)$, we can control it to a desired configuration, as follows.
		
	\medskip

	\paragraph{\bf Reaching a flock ring}
	{We first steer the system to $(\hat x,\delta \bar v)$ with $\bar v$ the desired unitary velocity direction and $\delta>0$ sufficiently small: this can be achieved exactly by local controllability.} We then let the system evolve by choosing $u_i=F_i$, which ensures that all velocities satisfy $v_i(t)=v_j(t)$, hence $x_i(t)-x_j(t)$ keeps being constant, hence $F_i\simeq 0$ along the motion. This also ensures that all velocities $v_i$ converge to $\sqrt\frac\alpha\beta \bar v$, as in Step 3.1 in the proof of Theorem \ref{mainthm}.
	
	We next reduce the flock radius. Note that each flock ring of radius $r$ can be realized as a trajectory of the system \eqref{syst}, provided that the control can be chosen as $u_i=F_i$. In the case of the flock ring, we have
	$
	|x_i(t)-x_j(t)|\geq |x_i(t)-x_{i+1}(t)|=2\sin(\tfrac{\pi}N) \bar R
	$
	and thus
	$$
	\|F_i\|\leq \sup_{r>2\sin(\tfrac{\pi}N) \bar R} |U'(r)| .
	$$
	The condition $\|u_i\|>\sup_{r>2\sin(\tfrac{\pi}N) \bar R} |U'(r)|$ ensures that each flock ring of radius $r\geq R$ can indeed be realized as a trajectory of the system. Hence, by a quasi-static deformation, we can steer the system from a flock of radius $r_1$ to a flock of radius $r_2$ whenever $r_1,r_2\geq \bar R$.
		
	\medskip

	\paragraph{\bf Reaching a mill ring} 
	We first steer the system to $(\hat x_i,\hat v_i)$ with $\hat v_i=\eps x_i^\perp$ for some small $\varepsilon>0$, by local controllability around $(\hat x_i,0)\in\Omega_\eps$. We choose 
	$
	u_i=F_i-\frac{|v_i|^2}{R} \frac{x_i}{|x_i|}
	$
	to ensure that each agent undergoes the correct centripetal force and moves along the circle of radius $R$. Note that $\|u\|\leq \eps+\frac{\alpha}{\beta R}\leq 2\eps$ due to $R>\frac{\alpha}{\beta \eps}$, hence the control is arbitrarily small, as in Step 3.3 in the proof of Theorem \ref{mainthm}.
	
	Similarly to the flock ring, we next reduce the radius to a chosen $r\geq \bar R$. Note that each mill ring with such a radius is a trajectory of \eqref{syst}, provided that $u_i=F_i-\frac{|v_i|^2}{r} \frac{x_i}{|x_i|}$. By symmetry of the configuration, both $F_i$ and $-\frac{|v_i|^2}{r} \frac{x_i}{|x_i|}$ are radial forces. Hence
	$$
	\|u_i\|\leq \|F_i\|+\frac{\alpha}{\beta r}\leq \sup_{r>2\sin(\tfrac{\pi}N) \bar R} |U'(r)|+\frac{\alpha}{\beta \bar R}\leq M,
	$$ 
	which ensures that any mill of radius $r\geq \bar R$ can be reached. Using a quasi-static deformation, we can steer the system from any mill ring of radius $r_1$ to any mill ring of radius $r_2$, whenever $r_1,r_2\geq \bar R$.

	\section{Concluding remarks}
	In this work, we have studied the controlled transition to and between flocks and mills for second-order swarming systems. We have shown that, through a combination of stability properties of the dynamics, the choice of suitable Lyapunov functions and the use of quasi-static deformations, it is possible to construct feedback laws steering the dynamics towards flocking and milling regimes. We have provided an effective optimisation-based synthesis of instantaneous controls guiding the dynamics to different basins of attraction, where self-organization achieves the desired configuration.
	
	The present work opens different research perspectives. So far, we have restricted our construction to instantaneous optimal feedback laws, which are fast to compute, but lack anticipation properties which are fundamental to truly benefit from the self-organization features of the dynamics. In particular, the use of a finite horizon optimal control, along with a choice of a sparse or total-variation control penalty, might induce a more parsimonious control action, acting over a reduced number of agents and time instances, to steer the swarm towards a self-organization regime before switching off.
		
	Another challenge appears as the number of agents in the swarm increases, a natural scenario in agent-based models. Then, the computation of finite-horizon feedback laws even if theoretically possible, see Remark \ref{rem:mf}, becomes prohibitively expensive, and it is necessary to resort to open-loop controls embedded into a model predictive control scheme. An alternative to this problem is to consider a mean-field approximation of the dynamics, working with a density of agents $\rho=\rho(x,v,t)$ instead of the microscopic state of the swarm, and reformulating the control design at this level. However, the synthesis of optimal control laws for transitioning  between mean-field flocks and mills is an open problem and may be the subject of future research.

	\bibliographystyle{abbrv}
	\bibliography{CKRTref}
	
\end{document}